
\documentclass[11pt]{amsart}
\pdfoutput=1
\usepackage{amsmath, amssymb}
\usepackage{amsfonts}
\usepackage{mathrsfs}
\usepackage[arrow,matrix,curve,cmtip,ps]{xy}
\usepackage{graphicx}
\usepackage{amsthm}
\usepackage{float}
\usepackage{amsthm}
\usepackage[utf8]{inputenc}
\usepackage[T1]{fontenc}
\usepackage{mathtools}
\DeclarePairedDelimiter{\ceil}{\lceil}{\rceil}
\allowdisplaybreaks

\newtheorem{theorem}{Theorem}[section]
\newtheorem{lemma}[theorem]{Lemma}
\newtheorem{proposition}[theorem]{Proposition}

\newtheorem{conjecture}[theorem]{Conjecture}

\newtheorem*{theorem*}{Theorem}
\theoremstyle{remark}

\newtheorem{question}{Question}

\numberwithin{equation}{section}



\begin{document}
\title[ Uniform Hyperbolicity of $\mathcal{C}$]{Uniform Hyperbolicity of the Graphs of Curves}

\author{Tarik Aougab}

\address{Department of Mathematics \\ Yale University \\ 10 Hillhouse Avenue, New Haven, CT 06510 \\ USA}
\email{tarik.aougab@yale.edu}

\date{\today}

\keywords{Uniform Hyperbolicity, Curve Complex, Mapping Class Group}

\begin{abstract}

Let $\mathcal{C}(S_{g,p})$ denote the curve complex of the closed orientable surface of genus $g$ with $p$ punctures. Masur-Minksy and subsequently Bowditch showed that $\mathcal{C}(S_{g,p})$ is $\delta$-hyperbolic for some $\delta=\delta(g,p)$. In this paper, we show that there exists some $\delta>0$ independent of $g,p$ such that the curve graph $\mathcal{C}_{1}(S_{g,p})$ is $\delta$-hyperbolic. Furthermore, we use the main tool in the proof of this theorem to show uniform boundedness of two other quantities which a priori grow with $g$ and $p$: the curve complex distance between two vertex cycles of the same train track, and the Lipschitz constants of the map from Teichm\"{u}ller space to $\mathcal{C}(S)$ sending a Riemann surface to the curve(s) of shortest extremal length.

\end{abstract}

\maketitle

\section{Introduction}
 Let $S_{g,p}$ denote the orientable surface of genus $g$ with $p$ punctures. The $\textit{Curve Complex}$ of $S_{g,p}$, denoted $\mathcal{C}(S_{g,p})$, is the simplicial complex whose vertices are in 1-1 correspondence with isotopy classes of non-peripheral simple closed curves on $S_{g,p}$, and such that $k+1$ vertices span a $k$-simplex if and only if the corresponding $k+1$ isotopy classes can be realized disjointly on $S_{g,p}$. $\mathcal{C}(S)$ is made into a metric space by identifying each $k$-simplex with the standard simplex in $\mathbb{R}^{k}$ with unit length edges (see \cite{Mas-Mur} for more details). 

 Masur and Minsky in \cite{Mas-Mur} showed that there is some $\delta=\delta(S_{g,p})$ such that $\mathcal{C}(S_{g,p})$ is $\delta$-hyperbolic, meaning that geodesic triangles in $\mathcal{C}(S)$ are $\delta-thin$: any edge is contained in the $\delta$-neighborhood of the union of the other two edges. Bowditch reproved this result in \cite{Bow}, showing that the hyperbolicity constant $\delta$ grows no faster than logarithmically in $g$ and $p$. Let $\mathcal{C}_{n}(S)$ denote the $n$-skeleton of $\mathcal{C}(S)$. The main result of this paper is that for $\mathcal{C}_{1}(S)$, the \textit{curve graph}, one may take $\delta$ to be independent of $g$ and $p$:

\begin{theorem} There exists $k>0$ so that for any admissible choice of $g,p$, $\mathcal{C}_{1}(S_{g,p})$ is $k$-hyperbolic.  
\end{theorem}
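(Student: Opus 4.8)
The plan is to deduce the theorem from a local-to-global criterion for $\delta$-hyperbolicity whose conclusion depends only on the constants in its hypotheses. I have in mind Bowditch's criterion: if $\mathcal{G}$ is a connected graph and one can assign to every pair of vertices $a,b$ a connected subgraph $\mathfrak{g}(a,b)$ containing $a$ and $b$ such that (i) $\operatorname{diam}\mathfrak{g}(a,b)\le K$ whenever $d(a,b)\le 1$, and (ii) $\mathfrak{g}(a,b)\subseteq N_K(\mathfrak{g}(a,c)\cup\mathfrak{g}(c,b))$ for every triple $a,b,c$, then $\mathcal{G}$ is $\delta$-hyperbolic with $\delta$ a function of $K$ alone. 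Since $\mathcal{C}_1(S_{g,p})$ is connected for every admissible choice of $g,p$, the theorem will follow once we exhibit such a family of paths $\mathfrak{g}$ with $K$ independent of $g$ and $p$; for adjacent or equal $a,b$ we take $\mathfrak{g}(a,b)$ to be a single edge, so that (i) is automatic and all the work is in verifying (ii), including the cases where one of the three paths is an edge.

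For a pair $a,b$ with $d(a,b)\ge 2$ I would build $\mathfrak{g}(a,b)$ from a train track splitting sequence: take a large recurrent track $\tau_0$ which carries $b$ and meets $a$ efficiently (so that $a$ is controlled by $\tau_0$, e.g.\ appears among its vertex cycles, or is a vertex cycle of a track dual to it), run a maximal splitting sequence $\tau_0\to\tau_1\to\cdots\to\tau_n$ directed by the measure that $b$ defines, stopping when $\tau_n$ collapses onto a regular neighborhood of $b$, and let $\mathfrak{g}(a,b)$ consist of $a$, a choice of vertex cycle $v(\tau_i)$ for each $i$, and $b$, joined up by geodesic segments. The reason this is a legitimate, well-controlled path is the estimate which I believe is the paper's \emph{main tool} and which I would establish first: \emph{there is a constant $M>0$, independent of $g$ and $p$, so that any two vertex cycles of a large recurrent train track on $S_{g,p}$ lie within distance $M$ in $\mathcal{C}_1$.} Granting this, the choice of vertex cycle is irrelevant up to $M$, and since one maximal split changes the carried vertex cycles by a bounded amount, the vertices of $\mathfrak{g}(a,b)$ form a sequence with uniformly bounded jumps, so $\mathfrak{g}(a,b)$ is really a reparametrized path to be treated as a coarse geodesic-guess. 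I would not attempt the estimate by bounding intersection numbers --- two vertex cycles of a track on a high-complexity surface can intersect enormously, which is exactly why Bowditch's logarithmic distance bound does not produce a uniform constant --- but rather by interpolation: between two vertex cycles of $\tau$ one produces a chain of curves, each carried by a track obtained from $\tau$ by local modifications and each a bounded distance from the next, the length of the chain being governed by how a vertex cycle locally runs through the branches rather than by their total number.

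With the paths in hand, only (ii) remains. Given $a,b,c$ and a vertex $v(\tau_i)$ on $\mathfrak{g}(a,b)$, I would argue that if $v(\tau_i)$ is far from $\mathfrak{g}(a,c)$ then it is close to $\mathfrak{g}(c,b)$, via a dichotomy for the third curve $c$: either $c$ is coarsely carried by $\tau_i$, in which case $c$ lies within $M$ of a vertex cycle of $\tau_i$ and hence a bounded distance from $v(\tau_i)$; or $c$ meets $\tau_i$ efficiently, in which case the maximal splitting sequence issuing from $\tau_i$ directed by $c$ shares a long terminal (respectively initial) segment with the sequence defining $\mathfrak{g}(c,b)$ (respectively $\mathfrak{g}(a,c)$), so that $v(\tau_i)$ reappears, up to the constant $M$, on one of those two paths. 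Establishing this alternative with a single constant uniform in $g,p$ --- a "at most one of the two relevant projections is large" phenomenon in the spirit of Behrstock's inequality, but phrased entirely through which train tracks carry, or are transverse to, which curves --- is where I expect the main obstacle to lie; it again leans on the vertex-cycle estimate and on the stability of maximal splitting sequences under changing the guiding lamination (which is also what makes the endpoint-swapping cases of (ii), and the Hausdorff-closeness of $\mathfrak{g}(a,b)$ with $\mathfrak{g}(b,a)$, go through). Once (i) and (ii) hold with a common $K$ valid for every $S_{g,p}$, Bowditch's criterion outputs a hyperbolicity constant $\delta=\delta(K)$ for $\mathcal{C}_1(S_{g,p})$ independent of $g$ and $p$, which is the theorem.
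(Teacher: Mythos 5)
Your outline is a genuinely different route from the paper's (the paper never touches splitting sequences in its proof of Theorem 1.1; it feeds a complexity-dependent lower bound on intersection numbers into Bowditch's construction of the coarse lines $L(\alpha,\beta,R)$ from weighted curves), but as written it has two substantial gaps, and both sit exactly where the real work of any proof of this theorem must be done. First, your ``main tool'' --- that any two vertex cycles of a track on $S_{g,p}$ are within a uniform distance $M$ in $\mathcal{C}_1$ --- is only gestured at. The interpolation argument you sketch (``a chain of curves, each carried by a track obtained from $\tau$ by local modifications\dots the length of the chain being governed by how a vertex cycle locally runs through the branches'') does not obviously terminate in a uniformly bounded number of steps: a vertex cycle can traverse on the order of $\mathrm{Br}(\tau)=O(\xi)$ branches, so a chain whose length is controlled by the combinatorics of the traversal could a priori still grow with $\xi$. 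You would need to say precisely what the local modifications are and why the chain has bounded length independently of $\xi$. Moreover, your stated reason for rejecting the intersection-number route is wrong: two vertex cycles of the same track intersect at most $4\,\mathrm{Br}(\tau)=O(\xi)$ times (each traverses each branch at most twice, and geodesic representatives in a tie neighborhood meet at most once per pair of strands per branch). What fails is only the \emph{complexity-independent} logarithmic upper bound $d\le O(\log i)$; a lower bound of the form $d\ge 4\Rightarrow i\ge \xi^{3/2}/f(\xi)^2$ with $f$ logarithmic immediately beats the linear bound $O(\xi)$ and yields $d\le 3$ uniformly. That lower bound --- curves at curve-complex distance $k$ must intersect at least $\bigl(\xi^{\lambda}/f(\xi)\bigr)^{k-2}$ times once $\xi$ is large --- is the actual main tool of the paper, and nothing in your proposal supplies a substitute for it.

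Second, and more seriously for the logic of the proof, condition (ii) of the guessing-geodesics criterion is exactly the content of hyperbolicity, and you explicitly defer it: the uniform ``at most one of the two projections is large'' dichotomy for a curve $c$ against a track $\tau_i$ in a splitting sequence, with constants independent of $g$ and $p$, is a theorem in its own right (it is essentially what Masur--Minsky prove with complexity-dependent constants via their nesting lemmas, and what Clay--Rafi--Schleimer had to establish uniformly). Asserting that the splitting sequence directed by $c$ ``shares a long terminal segment'' with the one defining $\mathfrak{g}(c,b)$ is not a proof; the stability of splitting sequences under change of guiding lamination is delicate even non-uniformly. So the proposal is a plausible program (close in spirit to the Clay--Rafi--Schleimer approach the paper cites as independent work) rather than a proof. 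By contrast, the paper's argument avoids all of this: Bowditch's original proof already reduces uniform hyperbolicity to showing that the sets $L(\alpha,\beta,R)$ with $R=O(\xi^{5/2})$ have uniformly bounded diameter, and since any two curves in such a set intersect at most $R^2+1=O(\xi^5)$ times, the intersection lower bound with $\lambda=6/7$ and $k=9$ forces their distance below $9$ once $\xi$ is large, giving a uniform diameter bound of $18$ and hence a uniform hyperbolicity constant.
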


 Note that $\mathcal{C}(S)$ is quasi-isometric to $\mathcal{C}_{1}(S)$, however the quasi-constants depend on the underlying surface. Therefore Theorem $1.1$ does not imply the uniform hyperbolicity of $\mathcal{C}(S)$. We also note that it has recently come to the author's attention that Brian Bowditch has independently obtained the same result \cite{BowII}, as have Clay, Rafi, and Schleimer using different methods \cite{Cl-Ra-Sc}. 

 Let $d_{\mathcal{C}(S_{g,p})}(\cdot,\cdot)$ denote distance in $\mathcal{C}_{1}(S_{g,p})$; when there is no ambiguity, the reference to $S_{g,p}$ will be ommitted in this notation. Let $\xi(S_{g,p})=3g+p-4$; $\xi(S)$ is called the \textit{complexity} of $S$. In the case that $S$ is a disconnected surface, define $\xi(S)$ to be the sum of the complexity of its connected components. If $\xi(S)\leq 0$, $S$ is called $\textit{sporadic}$ and each component of $S$ possesses one of finitely many well understood topological types. In truth, the definition of $\mathcal{C}(S)$ needs to be modified when $S$ is sporadic and connected (see \cite{Mas-Mur} for details), because it is exactly these surfaces for which no two simple closed curves are disjoint.

In what follows, $i(\alpha,\beta)$ is the \textit{geometric intersection number} of $\alpha $ and $\beta$, defined to be the minimum value of $|x\cap y|$, where $x$ and $y$ are representatives of the homotopy classes of $\alpha$, and $\beta$, respectively. The main tool in the proof of Theorem $1.1$ is the following Theorem:

\begin{theorem} For each $\lambda\in (0,1)$, there is some $N=N(\lambda)\in \mathbb{N}$ such that if $\alpha,\beta \in \mathcal{C}_{0}(S_{g,p})$, whenever $\xi(S_{g,p})>N$ and  $d_{\mathcal{C}}(\alpha,\beta)\geq k$, 
\[ i(\alpha,\beta)\geq   \left(\frac{\xi(S)^{\lambda}}{f(\xi(S))}\right)^{k-2}  \]
where $f(\xi)=O(\log_{2}(\xi))$. 

\end{theorem}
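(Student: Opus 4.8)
The plan is to derive Theorem~1.2 from a single \emph{one-step estimate}: for every $\lambda\in(0,1)$ there is $N(\lambda)$ so that if $\xi(S)>N(\lambda)$ and $\alpha,\beta\in\mathcal{C}_0(S)$ fill $S$, then there is $\gamma\in\mathcal{C}_0(S)$ with $i(\alpha,\gamma)=0$ and $i(\gamma,\beta)\le i(\alpha,\beta)\cdot f(\xi(S))/\xi(S)^{\lambda}$, where $f=O(\log_2\xi)$; in fact it is enough to produce such a $\gamma$ with $i(\gamma,\beta)\le i(\alpha,\beta)\cdot C\log_2\xi(S)/\xi(S)$ for an absolute constant $C$, since $\xi^{1-\lambda}$ dominates $\log_2\xi/f(\xi)$ once $\xi>N(\lambda)$. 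Granting the estimate, one proves Theorem~1.2 by induction on $I:=i(\alpha,\beta)$. If $\alpha,\beta$ do not fill then they have a common disjoint curve, so $d_{\mathcal C}(\alpha,\beta)\le 2$ and the bound is vacuous; if they fill, replace $(\alpha,\beta)$ by $(\gamma,\beta)$, note $d_{\mathcal C}(\alpha,\gamma)\le 1$, and iterate, using $i\le 1\Rightarrow d_{\mathcal C}\le 2$ as the base case (here $\xi>N$ rules out the sporadic surfaces where this fails). This yields $d_{\mathcal C}(\alpha,\beta)\le 2+\log_{c}I$ with $c=\xi^{\lambda}/f(\xi)$, and since $c>1$ for $\xi>N(\lambda)$ this rearranges to $i(\alpha,\beta)\ge c^{\,d_{\mathcal C}(\alpha,\beta)-2}\ge c^{\,k-2}$ whenever $d_{\mathcal C}(\alpha,\beta)\ge k$. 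So all the content is in the one-step estimate: inside the surface cut along $\alpha$ one must find a curve crossing $\beta$ only about $I/\xi$ times, up to a logarithmic factor.

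To prove the one-step estimate, I would cut $S$ along $\alpha$ to obtain a surface $\Sigma$ with $\xi(\Sigma)\ge\xi(S)-2$, in which $\beta$ becomes a system of $I$ \emph{pairwise disjoint} essential arcs $a_1,\dots,a_I$ filling $\Sigma$; a curve in the interior of $\Sigma$ is automatically disjoint from $\alpha$, and by an innermost-disk argument it is essential and non-peripheral in $S$ (and not isotopic to $\alpha$) as soon as it is essential and non-peripheral in the surface $\widehat\Sigma$ obtained from $\Sigma$ by capping the copies of $\alpha$. The key combinatorial input is that a family of disjoint essential arcs falls into at most $O(\xi(\Sigma))$ isotopy classes, so bundling parallel arcs turns the system into $t=O(\xi(S))$ \emph{bands} of multiplicities $m_1,\dots,m_t$ with $\sum_j m_j=I$; gluing the complementary regions of the $t$ core arcs along these bands exhibits a fat graph $\mathcal{G}$ — a spine of $\Sigma$ — with $O(\xi(S))$ vertices and edges and first Betti number of order $\xi(S)$. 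A simple cycle in $\mathcal{G}$ that crosses the bands $B_{j_1},\dots,B_{j_\ell}$ is realized by a simple closed curve $\gamma$ on $\Sigma$ with $i(\gamma,\beta)\le m_{j_1}+\dots+m_{j_\ell}$. It therefore suffices to find a short cycle, where edge $B_j$ has weight $m_j$, that represents an essential non-peripheral curve; since $\mathcal{G}$ has order $\xi(S)$ edges but order $\xi(S)$ independent cycles, a minimum-cycle-basis (or breadth-first-tree) argument produces a cycle of weighted length $O\!\big(\tfrac{I}{\xi(S)}\log_2\xi(S)\big)$, and one selects it within an admissible homotopy class. Feeding this $\gamma$ back into the induction finishes the proof, and the single logarithm here is exactly what makes $f=O(\log_2\xi)$.

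The main obstacle is getting this estimate with the correct dependence on $\xi$, and it is delicate in both directions. The bound must be of order $\tfrac{I}{\xi}\cdot(\text{polylog }\xi)$ and not $O(\sqrt I)$: a naive flat-metric computation on $\Sigma$ gives, via $\mathrm{sys}\lesssim\sqrt{\mathrm{area}}$, a short curve crossing only $O(\sqrt I)$ of the arcs, but that curve is typically boundary-parallel — hence isotopic to $\alpha$ and useless — and moreover a genuine $O(\sqrt I)$ bound would force $i(\alpha,\beta)$ to grow doubly exponentially in $d_{\mathcal C}(\alpha,\beta)$ on a \emph{fixed} surface, contradicting pseudo-Anosov examples. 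So one must really use that $\alpha\cup\beta$ fills — which is what caps the number of arc isotopy classes at $O(\xi)$, forcing a sparse fat graph with many independent cycles and hence short cycles — together with the sheer complexity of $\Sigma$. The second, more technical, obstacle is ensuring that the short cycle one extracts represents an \emph{essential, non-peripheral} curve in $S$: being homologically non-trivial in $\widehat\Sigma$ suffices when the genus is large, but on surfaces with many punctures a non-peripheral curve may be homologically peripheral, so one must argue homotopically — this is the point at which it is cleanest to run the whole argument inside the language of train tracks, taking $\gamma$ to be a vertex cycle of a track onto which $\beta$ has been split, since vertex cycles of large birecurrent train tracks are automatically essential and non-peripheral. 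Controlling this book-keeping, and the passage from ``few band crossings'' to ``small geometric intersection with $\beta$'', is where the work of the proof concentrates.
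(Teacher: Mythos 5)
Your proposal follows essentially the same route as the paper: cut along $\alpha$, bundle the arcs of $\beta$ into $O(\xi)$ isotopy classes with multiplicities (the paper's ``mass''), pass to the dual graph of complementary regions, extract a short cycle via an average-degree/girth bound (the paper's Lemma 3.2, applied after deleting the few high-multiplicity edges --- which is the mechanism behind your weighted short-cycle claim), verify that the resulting curve is essential, and close the induction (yours on $i(\alpha,\beta)$, the paper's on the distance $k$; the two are equivalent). The one step you leave genuinely open --- certifying that the short cycle yields an essential, non-peripheral simple closed curve, especially on punctured surfaces --- is exactly where the paper concentrates its effort, resolving it not by homology or train tracks but by showing $\gamma$ and $\beta$ are in minimal position (any bigon would reduce $|\alpha\cap\beta|$) and then surgering away self-intersections and peripheral sub-loops of $\gamma$.
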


In order to emphasize how Theorem $1.2$ is used in the proof of Theorem $1.1$, we define the numbers $q_{k,g,p}$ as follows:
\[ q_{k,g,p}:= \min\left\{ i(\alpha,\beta) | \alpha,\beta\in \mathcal{C}_{0}(S_{g,p}) \hspace{1 mm} \mbox{with} \hspace{1 mm} d_{\mathcal{C}}(\alpha,\beta) =k.\right\} \]
 Irrespective of $\xi$, curve complex distance is bounded above by a logarithmic function of intersection number, and therefore if we fix $g$ and $p$, $q_{k,g,p}$ grows exponentially as a function of $k$. But one may also study $q_{k,g,p}$ as a function of $g$ or $p$:

\begin{question} How does $q_{k,g,p}$ grow as a function of $\xi$? 
\end{question}
The content of Theorem $1.2$ is that $q_{k,g,p}$ eventually grows faster than $\xi^{\lambda(k-2)}$ as a function of $\xi$ for any $\lambda\in (0,1)$. 

We conclude the paper with two further applications of Theorem $1.2$, the first being the resolution of a question posed by Masur and Minsky in \cite{Mas-Mur} regarding vertex cycles of train tracks on surfaces (see section $5$ for relevant definitions). Specifically they ask:

\begin{question} As a consequence of the fact that there are only finitely many train tracks on a surface $S$ up to combinatorial equivalence, there is a bound $B=B(S)$ depending only on the topology of $S$ such that 

\[ d_{\mathcal{C}(S)}(\alpha,\beta)<B \]
if $\alpha,\beta\in \mathcal{C}_{0}(S)$ are two vertex cycles of the same train track $\tau$. Can $B$ be made independent of $S$? That is, is there some $B>0$ such that, for any choice of $S$, the curve complex distance between two vertex cycles of the same train track on $S$ is no more than $B$?

\end{question}

In section 5, we show 

\begin{theorem}
Let $\tau\subset S_{g,p}$ be a train track, and let $\alpha,\beta$ be vertex cycles of $\tau$. Then if $\xi(S_{g,p})$ is sufficiently large, 
\[ d_{\mathcal{C}(S)}(\alpha,\beta)\leq 3. \]
\end{theorem}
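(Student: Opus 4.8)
The plan is to combine the exponential lower bound on intersection number furnished by Theorem 1.2 with an elementary linear upper bound on the intersection number of two vertex cycles of a fixed train track.

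The first step is to record that linear estimate. Recall (see, e.g., \cite{Mas-Mur}) that a vertex cycle of $\tau$ passes over each branch of $\tau$ at most twice; equivalently, its transverse weight function $w$ satisfies $w(b)\le 2$ for every branch $b$ of $\tau$. A train track on $S_{g,p}$ has only $O(\xi(S))$ branches. Finally, for any two simple closed curves $\gamma_1,\gamma_2$ carried by $\tau$ with transverse weight functions $w_1,w_2$, putting both curves in normal position inside a fibered neighborhood of $\tau$ confines all of their intersections to the switch regions, which gives $i(\gamma_1,\gamma_2)\le C_0\sum_b w_1(b)w_2(b)$ for a universal constant $C_0$, the sum being over the branches of $\tau$. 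Combining these three facts, there is a universal constant $c>0$ such that any two vertex cycles $\alpha,\beta$ of any train track on $S=S_{g,p}$ satisfy $i(\alpha,\beta)\le c\,\xi(S)$.

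Now comes the arithmetic. Fix $\lambda=3/4$ and let $N=N(\lambda)$ be the constant provided by Theorem 1.2. Suppose, for contradiction, that $\xi(S)>N$ and $d_{\mathcal{C}(S)}(\alpha,\beta)\ge 4$. Applying Theorem 1.2 with $k=4$ yields
\[
c\,\xi(S)\ \ge\ i(\alpha,\beta)\ \ge\ \left(\frac{\xi(S)^{3/4}}{f(\xi(S))}\right)^{2}\ =\ \frac{\xi(S)^{3/2}}{f(\xi(S))^{2}},
\]
and hence $c\,f(\xi(S))^{2}\ge \xi(S)^{1/2}$. Since $f(\xi)=O(\log_2\xi)$, the left-hand side is $O((\log_2\xi(S))^{2})$, while the right-hand side grows like $\xi(S)^{1/2}$; this is impossible once $\xi(S)$ exceeds some universal threshold $N_0$. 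Therefore, interpreting ``$\xi(S)$ sufficiently large'' to mean $\xi(S)>\max(N,N_0)$, we conclude $d_{\mathcal{C}(S)}(\alpha,\beta)\le 3$.

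The only place requiring genuine care is the linear bound $i(\alpha,\beta)\le c\,\xi(S)$: one must invoke the correct structural property of vertex cycles and bound the branch count of $\tau$ uniformly, independent of the combinatorial type of $\tau$. Everything downstream is bookkeeping with logarithms, where the one subtle point is that $f$ grows merely logarithmically, so $f(\xi)^{2}=o(\xi^{1/2})$ and the estimate truly forces the exponent $k-2$ strictly below $2$ — precisely what is needed to obtain the bound $3$ rather than some larger universal constant. I do not anticipate serious obstacles; the degenerate cases in which $\tau$ is non-maximal or carries few simple closed curves only make the statement easier, and may be disposed of at the outset.
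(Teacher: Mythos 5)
Your proposal is correct and follows essentially the same route as the paper: both arguments pair the Theorem 1.2 lower bound with $\lambda=3/4$, $k=4$ against a linear upper bound $i(\alpha,\beta)=O(\xi(S))$ obtained from the facts that a vertex cycle traverses each branch at most twice and that the number of branches is $O(\xi(S))$. The paper realizes the upper bound by taking geodesic representatives in a hyperbolic metric adapted to a (generic, birecurrent) $\tau$ and counting crossings in the tie neighborhood of each branch, which is just a concrete implementation of your normal-position count $i(\gamma_1,\gamma_2)\le C_0\sum_b w_1(b)w_2(b)$.
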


 In what follows, let $\mbox{Teich}(S)$ denote the Teichm\"{u}ller space of $S$, the space of marked Riemann surfaces homeomorphic to $S$, modulo conformal equivalence isotopic to the identity. For the remainder of this paper, we will be concerned with the topology on $\mbox{Teich}(S)$ induced by the \textit{Teichm\"{u}ller metric}, denoted by $d_{\mbox{Teich}}(\cdot,\cdot)$. In this metric, the distance between two marked Riemann surfaces $x$ and $y$ is determined by the minimal dilatation constant associated to a quasiconformal map isotopic to the identity between $x$ and $y$ (see \cite{Far-Mar} for more details). 

In the final section, we consider the map $\Phi:\mbox{Teich}(S)\rightarrow \mathcal{C}(S)$ introduced by Masur and Minsky in \cite{Mas-Mur}; here $\Phi(x)$ is the set of isotopy classes of simple closed curves minimizing the extremal length in the conformal class of $x$. Note that $\Phi$ is technically a map into $\mathcal{P}(\mathcal{C}_{0}(S))$ (the power set of $\mathcal{C}_{0}$), but $\mbox{diam}_{\mathcal{C}}(\Phi(x))$ is uniformly bounded, and what's more, there exists some $c=c(S)$ so that if $x,y$ are Riemann surfaces within $1$ of each other in $\mbox{Teich}(S)$, then 
\[ \mbox{diam}_{\mathcal{C}}(\Phi(x)\cup\Phi(y))\leq c. \]
A map $F$ from $\mbox{Teich}(S)$ to $\mathcal{C}(S)$ is then constructed by defining $F(x)$ to be  any element of $\Phi(x)$; an immediate consequence of the existence of $c$ is that $F$ is coarsely Lipschitz:  for any $x,y\in \mbox{Teich}(S)$, 
\[ d_{\mathcal{C}}(F(x),F(y))\leq c(s)\cdot d_{\mbox{Teich}}(x,y)+ c(s) \]

 In the final section, we show that $c$ can be taken to be independent of $S$:

\begin{theorem} There exists $c>0$ so that for any choice of $g,p$, 
\[ d_{\mathcal{C}(S_{g,p})}(F(x),F(y))\leq c\cdot d_{\mbox{Teich}(S_{g,p})}(x,y)+c \]
for any $x,y\in \mbox{Teich}(S_{g,p})$. In other words, the map sending a Riemann surface to any curve in $\mathcal{C}(S)$ with minimal extremal length is coarsely Lipschitz, with Lipschitz contants independent of the choice of $S$. 

\end{theorem}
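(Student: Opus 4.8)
The plan is to reduce the statement to a lower bound on intersection number via Theorem 1.2, and then to bound intersection numbers of shortest extremal length curves in terms of Teichmüller distance. First I would recall the standard fact that for any $x \in \mbox{Teich}(S)$ there is a curve $\alpha$ with extremal length $\mathrm{Ext}_x(\alpha)$ bounded above by a universal constant (by the collar lemma / Bers-type argument one can even take this constant to be independent of $S$), so $F(x)$ can be chosen with $\mathrm{Ext}_x(F(x)) \le M$ for some universal $M$. Next, the key analytic input: if $d_{\mbox{Teich}}(x,y) \le t$, then extremal lengths distort by a bounded multiplicative factor, $\mathrm{Ext}_y(\gamma) \le e^{2t}\,\mathrm{Ext}_x(\gamma)$ for every curve $\gamma$ (Kerckhoff's formula for the Teichmüller metric as the $\sup$ of log-ratios of extremal lengths). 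Combining these, $\mathrm{Ext}_y(F(x)) \le M e^{2t}$, and of course $\mathrm{Ext}_y(F(y)) \le M$.

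The second ingredient is the inequality $i(\alpha,\beta)^2 \le \mathrm{Ext}_y(\alpha)\,\mathrm{Ext}_y(\beta)$, valid on any Riemann surface $y$ (Minsky's inequality, from representing $\alpha$ by a Euclidean-geodesic multicurve in the flat metric dual to the extremal-length extremal annulus for $\beta$). Applying this with $\alpha = F(x)$, $\beta = F(y)$ on the surface $y$ gives
\[
i(F(x),F(y)) \;\le\; \sqrt{\mathrm{Ext}_y(F(x))\,\mathrm{Ext}_y(F(y))} \;\le\; M e^{t}.
\]
So the intersection number of the two shortest-extremal-length curves grows at most exponentially in $d_{\mbox{Teich}}(x,y)$, with the base and constant universal, independent of $g,p$.

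Finally I would feed this into Theorem 1.2 to convert it into a curve-graph distance bound. Fix once and for all some $\lambda \in (0,1)$ (say $\lambda = 1/2$) and let $N = N(\lambda)$, $f$, and $k$ be as in Theorem 1.2. If $\xi(S)$ is large enough that $\xi(S)^{\lambda}/f(\xi(S)) \ge 2$ — which holds for all sufficiently large $\xi$ since $f(\xi) = O(\log_2 \xi)$, and we may also demand $\xi(S) > N$ — then whenever $d_{\mathcal{C}}(F(x),F(y)) \ge k$ the theorem gives $i(F(x),F(y)) \ge 2^{\,d_{\mathcal{C}}(F(x),F(y)) - 2}$, hence $d_{\mathcal{C}}(F(x),F(y)) \le 2 + \log_2 i(F(x),F(y)) \le 2 + \log_2\!\big(M e^{t}\big) = (\log_2 e)\, t + (2 + \log_2 M)$. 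If instead $d_{\mathcal{C}}(F(x),F(y)) < k$ the bound is trivial. Taking $c$ to be the larger of $\log_2 e$ and $2 + \log_2 M$ and $k$ gives the claimed uniform coarse-Lipschitz inequality for all surfaces of sufficiently large complexity; the finitely many remaining (bounded-complexity, non-sporadic) cases are absorbed by enlarging $c$, since for each such $S$ the constant $c(S)$ is finite.

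I expect the main obstacle to be bookkeeping rather than a genuine difficulty: one must make sure that the universal constant $M$ in the Bers-type bound on shortest extremal length is genuinely independent of $S$ (the collar lemma argument does give this, but it should be stated carefully), and one must track that the diameter bound $c$ for $\Phi(x)$ and the passage from "any element of $\Phi(x)$" to a well-defined $F$ does not secretly reintroduce $S$-dependence — this is exactly where Theorem 1.5 differs from the a priori bound, and it is handled by the same Theorem 1.2 mechanism applied to pairs of curves in $\Phi(x)$. The analytic inequalities (Kerckhoff, Minsky) are classical and uniform, so no new surface-dependence enters there.
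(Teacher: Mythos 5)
There is a genuine gap at the very first step: the claim that the shortest extremal length curve on $x$ has extremal length bounded by a \emph{universal} constant $M$ independent of $S$ is false. There exist hyperbolic surfaces (e.g.\ Buser--Sarnak surfaces) whose systole grows like $\log g$, and since extremal length is bounded below by a fixed multiple of hyperbolic length (the other half of Maskit's comparison), the minimal extremal length on such surfaces is unbounded as $g\to\infty$. The correct statement, which is what the paper uses, is only that every $x$ carries a curve of extremal length at most $E(S)$ with $E(S)=O(\xi\log\xi)$, obtained by combining the Bers constant $b(S)=O(\log\xi)$ with Maskit's upper bound $\mbox{ext}_{x}(\gamma)\leq \tfrac{1}{2}\mbox{hyp}_{x}(\gamma)e^{\mbox{hyp}_{x}(\gamma)/2}$. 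Once $M$ is replaced by $E(S)$, your final step collapses: you only invoke Theorem $1.2$ in the weakened form $i(\alpha,\beta)\geq 2^{\,d_{\mathcal{C}}(\alpha,\beta)-2}$, which yields $d_{\mathcal{C}}(F(x),F(y))\leq (\log_{2}e)\,t+2+\log_{2}E(S)$, and the additive term $\log_{2}E(S)=O(\log\xi)$ is exactly the non-uniform Bowditch-type bound the theorem is trying to improve.

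The repair is to use Theorem $1.2$ at full strength, i.e.\ with the base $\xi^{\lambda}/f(\xi)$ growing in $\xi$ rather than merely being at least $2$; this is the route the paper takes. After subdividing the Teichm\"uller geodesic into unit segments (so $t\leq 1$), Kerckhoff's formula and Minsky's inequality give $i(F(x),F(y))\leq eE(S)=o(\xi^{2})$, whereas $d_{\mathcal{C}}(F(x),F(y))\geq 5$ together with Theorem $1.2$ (say with $\lambda=3/4$) would force $i(F(x),F(y))\geq \bigl(\xi^{3/4}/f(\xi)\bigr)^{3}\gg \xi^{2}$ for $\xi$ large; hence each unit step moves $F$ by at most $4$ in $\mathcal{C}_{1}(S)$ once $\xi$ exceeds a threshold, and the finitely many small-complexity cases are absorbed into $c$ via the standard logarithmic bound on $d_{\mathcal{C}}$ in terms of $i$. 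Your remaining ingredients (Kerckhoff, Minsky's inequality, the subdivision reduction, and handling bounded complexity separately) are all correct and match the paper.
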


 \textbf{How Uniform Hyperbolicity follows from Theorem $1.2$}

In both proofs of hyperbolicity of $\mathcal{C}$ by Masur-Minsky and Bowditch, the method of proof is to construct a family of quasigeodesics satisfying certain properties.

In \cite{Mas-Mur}, Masur and Minsky show the existence of a coarsely transitive family of quasigeodesics $\left\{g_{i}\right\}_{i\in I}$ (images of Teichm\"{u}ller geodesics under the above-mentioned map) equipped with projection maps $\phi_{i}:\mathcal{C}(S)\rightarrow g_{i}$, essentially having the property that the diameter of $\phi_{i}(\mathcal{C}(S)\setminus N_{\epsilon}(g_{i}))$ is bounded, with the bound depending only on $S$ and $\epsilon>0$. Here $N_{\epsilon}(\cdot)$ denotes the tubular neighborhood of radius $\epsilon$. It is then demonstrated that this is a sufficient condition for $\delta$-hyperbolicity. In \cite{Bow}, Bowditch constructs a similar family of uniform quasigeodesics and uses them to show that $\mathcal{C}(S)$ satisfies a subquadratic isoperimetric inequality, also implying $\delta$-hyperbolicity.

Each approach has its own implications; the Masur-Minsky program emphasizes greatly the connection between the geometry of the curve complex and that of Teichm\"{u}ller and hyperbolic space. Indeed, in order to prove the projection property mentioned above, they show a host of independently interesting results regarding the structure and combinatorics of train tracks on surfaces and Teichm\"{u}ller geometry. In contrast, Bowditch's approach yields more control on the actual size of the hyperbolicity constant.

 Specifically, both Bowditch and Masur-Minsky rely on a key lemma, that every unit area singular Euclidean surface homeomorphic to $S$ contains an annulus of definite width $W=W(S)$. Masur and Minsky prove this lemma using a limiting argument, while Bowditch's proof is more effective and yields some quantitative control on the growth of $W$ as a function of $S$. Uniform hyperbolicity of $\mathcal{C}_{1}$ follows if one plugs the result of Theorem $1.2$ into Bowditch's more effective set-up, as is demonstrated in section $4$.

We note that, while the conclusion of Theorem $1.2$ suffices to prove Theorem $1.1$, we conjecture that this lower bound can be improved: 
\begin{conjecture} Let $k,\alpha,\beta$ be as in the statement of Theorem $1.1$. Then there exists a polynomial $f_{k}:\mathbb{N}\rightarrow \mathbb{N}$ of degree $k-2$ such that
\[d_{\mathcal{C}}(\alpha,\beta)\geq k \Rightarrow i(\alpha,\beta)\geq f_{k}(\xi(S)).\]
\end{conjecture}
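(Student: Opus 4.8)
The plan is to argue by induction on $k$, sharpening the mechanism behind Theorem $1.2$ so that each inductive level contributes a clean factor $\xi(S)$ rather than $\xi(S)^{\lambda}/f(\xi(S))$. The base case is $k=3$: here $d_{\mathcal{C}}(\alpha,\beta)\ge 3$ forces $\alpha\cup\beta$ to fill $S$, so a regular neighborhood $N(\alpha\cup\beta)$ — a $4$-valent ribbon graph with $i(\alpha,\beta)$ vertices, hence of Euler characteristic $-i(\alpha,\beta)$ — must carry all the topology of $S$. Since the complementary pieces are disks and once-punctured disks, $\chi(S)\ge -i(\alpha,\beta)$, i.e. $i(\alpha,\beta)\ge 2g+p-2>\tfrac{2}{3}\,\xi(S)$, and one may take $f_{3}$ linear. (Degree $k-2$ is also the expected truth: iterating a ``curve in a subsurface in a subsurface $\dots$'' construction $k-2$ times yields pairs at distance $k$ with intersection number of order $\xi(S)^{k-2}$, so the conjectured bound would be sharp up to the constant.)

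For the inductive step, assume the statement for $k$ and let $d_{\mathcal{C}}(\alpha,\beta)\ge k+1\ge 4$. It suffices to produce an essential non-peripheral curve $\gamma$ with $\gamma$ disjoint from $\alpha$ and
\[ i(\gamma,\beta)\le \frac{i(\alpha,\beta)}{c\,\xi(S)} \]
for a universal $c>0$: then $d_{\mathcal{C}}(\alpha,\gamma)\le 1$ gives $d_{\mathcal{C}}(\gamma,\beta)\ge k$, the inductive hypothesis applied to $(\gamma,\beta)$ gives $i(\gamma,\beta)\ge f_{k}(\xi(S))$, and the two inequalities combine to $i(\alpha,\beta)\ge c\,\xi(S)\,f_{k}(\xi(S))$, a polynomial of degree $k-1$. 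The natural candidate for $\gamma$ is a boundary component of the subsurface $\Sigma_{t}$ filled by $\alpha$ together with the sub-path of $\beta$ formed by the first $t$ of the $i(\alpha,\beta)$ arcs into which $\alpha$ cuts $\beta$; each $\partial\Sigma_{t}$ is automatically disjoint from $\alpha$, and one wants a threshold $t$ for which $\Sigma_{t}$ already carries an essential non-peripheral boundary curve while $\partial\Sigma_{t}$ still meets only an $O(1/\xi(S))$-fraction of the crossings of $\beta$ with $\alpha$. A pigeonhole over the $i(\alpha,\beta)$ choices of $t$ — using that $\chi(\Sigma_{t})$ changes by bounded increments while $\Sigma_{i(\alpha,\beta)}$ must absorb all of $\chi(S)$ — is the mechanism by which such a $t$ should appear.

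The crux, and precisely the place where our proof of Theorem $1.2$ loses efficiency, is making the displayed bound \emph{lossless} in $\xi(S)$. In Theorem $1.2$ the analogous reduction is carried out by running a train-track splitting sequence — equivalently, by invoking Bowditch's effective large-annulus lemma for unit-area singular flat metrics homeomorphic to $S$ — and the estimate there degrades both by a factor of order $\log_{2}\xi(S)$ and by the passage from exponent $1$ to exponent $\lambda$; over the $k-2$ inductive levels this produces exactly the $f(\xi)^{k-2}$ and the exponent $\lambda(k-2)$. To reach the conjecture one would need either (i) a quantitatively optimal large-annulus lemma, in which the width bound for unit-area singular Euclidean metrics on $S$ has no logarithmic correction in $\xi(S)$, which would let the flat-geometry argument run with $\lambda=1$ and $f$ constant; or (ii) a purely combinatorial surgery replacing the splitting sequence, in which the sub-path of $\beta$ and the curve $\gamma$ above are selected directly so that $i(\gamma,\beta)$ and $\chi(\Sigma_{t})$ are controlled simultaneously. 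I expect (ii) to be the more promising route, but also the principal obstacle: at present it is not clear how to prevent $\partial\Sigma_{t}$ from crossing $\beta$ a constant multiple of $i(\alpha,\beta)$ times rather than an $O(1/\xi(S))$-fraction of that, and this slack is exactly what the logarithm in Theorem $1.2$ is paying for.
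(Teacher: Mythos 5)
The statement you are addressing is left open in the paper: it is stated as a conjecture and no proof is offered, so there is nothing of mine to compare your argument against, and judged on its own terms your write-up is a program rather than a proof --- as you yourself acknowledge. Your skeleton (cut along $\alpha$, produce an essential non-peripheral curve $\gamma$ disjoint from $\alpha$ whose intersection with $\beta$ is a definite fraction of $i(\alpha,\beta)$, then apply the inductive hypothesis to $(\gamma,\beta)$ and the triangle inequality) is exactly the skeleton of the proof of Theorem $1.2$, and your base case $i(\alpha,\beta)\geq 2g+p-2$ is Lemma $2.2$. The genuine gap is the displayed inequality $i(\gamma,\beta)\leq i(\alpha,\beta)/(c\,\xi(S))$: nothing in the sketch produces such a $\gamma$. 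The pigeonhole over the truncated subsurfaces $\Sigma_{t}$ is never carried out, and you concede that you cannot at present rule out $\partial\Sigma_{t}$ crossing $\beta$ a constant multiple of $i(\alpha,\beta)$ times; but this lossless surgery step is precisely the content of the conjecture --- once it is granted, the rest of your induction is routine bookkeeping. So the proposal does not establish the statement; it correctly identifies, and then stops at, the open difficulty.

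One factual correction about where the loss in Theorem $1.2$ actually occurs, since it bears on your proposed remedies: the proof there is purely combinatorial and uses neither a train-track splitting sequence nor the large-annulus lemma (the latter enters only in Section $4$, inside Bowditch's framework for deducing Theorem $1.1$). The exponent $\lambda<1$ is lost because one may delete at most $\xi^{\lambda}$ ``large mass'' arcs while keeping the average degree of the dual graph above $2+\epsilon$, and the factor $f(\xi)=O(\log_{2}\xi)$ is the girth bound of Lemma $3.2$ for graphs of average degree slightly above $2$. Consequently your option (i) --- a sharper annulus lemma --- would not by itself upgrade the bound, since it plays no role in Theorem $1.2$; and the girth route cannot do better, because graphs of average degree barely above $2$ genuinely have girth of order $\log|V|$, so a curve meeting only boundedly many arcs of $\beta$ must be found by a mechanism other than short cycles in the dual graph. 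That is exactly your option (ii), which is the missing ingredient.
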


\textbf{Organization of the paper.} In section $2$, we establish Theorem $1.2$ for $k=3$ which is used in section $3$ as the base case of an induction argument on the curve complex distance. In section $3$, we complete the proof of Theorem $1.2$, and in section $4$ we show how Theorem $1.1$ immediately follows from Theorem $1.2$, together with the extensive quantitative control that Bowditch obtains on the growth of the hyperbolicity constant in his original proof. In section $5$, we use Theorem $1.2$ to prove Theorem $1.3$, and in section $6$, we show how to derive Theorem $1.4$ as a corollary of Theorem $1.2$. 

\textbf{Acknowledgements.} The author would primarily like to thank his adviser, Yair Minsky, for invaluable guidance and introducing him to the problem of obtaining quantitative control on the hyperbolicity of $\mathcal{C}$ as a function of complexity. The author would also like to thank Ian Biringer, Asaf Hadari, and Thomas Koberda for reading a draft of this paper and offering many helpful comments and suggestions. Finally, the author thanks Yael Algom-Kfir, Spencer Dowdall, Johanna Mangahas, and Babak Modami for many helpful and insightful conversations.

\section{Lower Bounds on Intersection Numbers for Filling Pairs}
Let $\Gamma=\left\{\gamma_{i}\right\}_{i\in I}$ be a collection of curves on $S_{g,p}$ in pairwise \textit{minimal position}, meaning that for each $k\neq j$, 
\[ |\gamma_{j}\cap \gamma_{k}|=i(\gamma_{j},\gamma_{k}). \]
We say that such a collection \textit{fills} the surface if the complement $S_{g,p}\setminus \Gamma$ is a union of topological disks and once-punctured disks. Equivalently, $\Gamma$ fills if every homotopically non-trivial, non-peripheral (i.e., not homotopic into a neighborhood of a puncture) simple isotopy class has a nonzero geometric intersection number with atleast one member of $\Gamma$. Henceforth, we will use the word \textit{essential} to refer to any curve which is non-peripheral and homotopically non-trivial. The study of Question 1 began with the following simpler question:

\begin{question} On $S_{g,p}$, how many times does a pair of simple closed curves need to intersect in order to fill? \end{question}

Note that two simple closed curves $\alpha$ and $\beta$ fill if and only if $d_{\mathcal{C}}(\alpha,\beta)\geq 3$, for this precisely means that there is no essential simple closed curve which is simultaneously disjoint from both $\alpha$ and $\beta$. In light of the notation used in the introduction, Question $3$ can therefore be restated as a question about $q_{3,g,p}$. 

\begin{lemma} $q_{3,g,p}\geq 2g+p-2$. \end{lemma}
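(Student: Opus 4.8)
Proof proposal for Lemma 2.2 ($q_{3,g,p} \geq 2g+p-2$).

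The plan is to realize $\alpha$ and $\beta$ in minimal position and then run an Euler characteristic count on the cell structure that $\alpha\cup\beta$ induces on the surface. Since $d_{\mathcal{C}}(\alpha,\beta)\geq 3$, the pair fills, so the components of $S_{g,p}\setminus(\alpha\cup\beta)$ are disks and once-punctured disks; in particular $i(\alpha,\beta)\geq 1$, so $\alpha\cap\beta\neq\emptyset$. Filling in the $p$ punctures, I obtain the closed surface $S_{g}$ on which $\alpha$ and $\beta$ are essential simple closed curves and every complementary region of $G:=\alpha\cup\beta$ is an open disk. Because $\alpha$ and $\beta$ are each connected and neither is disjoint from the other, every vertex of $G$ is a transverse crossing of one arc of $\alpha$ with one arc of $\beta$, hence $4$-valent, and every edge of $G$ has its two endpoints among these vertices, so $G$ is a genuine $4$-valent graph embedded in $S_{g}$.

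First I would record the combinatorics. Put $n=i(\alpha,\beta)=|\alpha\cap\beta|$, so $G$ has $V=n$ vertices; $4$-valence gives $2E=4V$, hence $E=2n$. Writing $F$ for the number of complementary disks, Euler's formula on $S_{g}$ gives $n-2n+F=\chi(S_{g})=2-2g$, so $F=n+2-2g$.

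Next I would count the punctured regions. Each of the $p$ punctures lies in a unique component of $S_{g,p}\setminus(\alpha\cup\beta)$, and no component contains two punctures, since a twice-punctured disk is neither a disk nor a once-punctured disk. Hence at least $p$ of the $F$ complementary regions are once-punctured disks, i.e. $F\geq p$. Combining with the previous paragraph, $n+2-2g\geq p$, that is $i(\alpha,\beta)\geq 2g+p-2$; taking the minimum over all pairs at distance $3$ yields $q_{3,g,p}\geq 2g+p-2$.

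There is essentially no serious obstacle here; the argument is a one-line Euler characteristic estimate once the setup is in place. The only points requiring a moment of care are the degenerate cases, which do not arise precisely because a filling pair of essential simple closed curves must intersect and both curves are connected, so $G$ really is a $4$-valent graph with $V=n$ and $E=2n$; and the observation that minimal position is used only to ensure $|\alpha\cap\beta|=i(\alpha,\beta)$, the counting itself being valid for any transverse filling representatives. One should also note that the bound obtained this way need not be sharp, but it is more than enough for the purposes of Section $2$.
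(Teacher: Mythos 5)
Your proof is correct and is essentially the paper's argument: both are the Euler characteristic count on the cell decomposition induced by $\alpha\cup\beta$ with $V=n$, $E=2n$, the only cosmetic difference being that you cap off the punctures and use $F\geq p$ where the paper works on $S_{g,p}$ directly and uses $D\geq 0$ for the unpunctured disks (these are the same inequality, since each puncture lies in its own region). Your extra care about $4$-valence and the degenerate cases is fine but does not change the route.
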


\begin{proof}
Suppose $\alpha$ and $\beta$ fill $S_{g,p}$. Then $\alpha\cup\beta$ can be considered as the $1$-skeleton of a decomposition of $S_{g,p}$ into disks and once-punctured disks. Letting $D$ denote the number of disks in this decomposition, we obtain
\[\chi(S_{g,p})=2-2g-p=-i(\alpha,\beta)+D \]
The right hand side comes from the fact that there are twice as many edges as there are vertices in this decomposition, and the vertices are precisely the intersections between $\alpha$ and $\beta$. Then since $D\geq 0, q_{3,g,p}\geq  2g+p-2$. 
\end{proof}

\section{Proof of Theorem $1.2$}
In this section, we prove Theorem $1.2$: \vspace{2 mm}

\textbf{Theorem 1.2.} \textit{For each $\lambda\in (0,1)$, there is some $N=N(\lambda)\in \mathbb{N}$ such that if $\alpha,\beta \in \mathcal{C}_{0}(S_{g,p})$, whenever $\xi(S_{g,p})>N$ and  $d_{\mathcal{C}}(\alpha,\beta)\geq k$, 
\[ i(\alpha,\beta)\geq   \left(\frac{\xi(S)^{\lambda}}{f(\xi(S))}\right)^{k-2}  \]
where $f(\xi)=O(\log_{2}(\xi))$. }

\vspace{2 mm}

We will show that if $\xi>N(\lambda)$, then 
\[i(\alpha,\beta)<\left(\frac{\xi^{\lambda}}{f(\xi)}\right)^{k-2} \Rightarrow d_{\mathcal{C}}(\alpha,\beta)<k. \] 
 We induct on the curve complex distance $k$; the base case $k=3$ was established in section $2$. Both $N(\lambda)$ and $f(\xi)$ will be established in the course of the proof.

Thus we begin by assuming that $\alpha,\beta \in \mathcal{C}_{0}(S_{g,p})$ are such that 
\[ i(\alpha,\beta)<\left(\frac{\xi^{\lambda}}{f(\xi)}\right)^{k-2}. \]

Assume that $\alpha$ and $\beta$ are in minimal position.  Cutting along $\alpha$ yields $S':= S\setminus \alpha$, which is topologically either a genus $g-1$ surface with $p+2$ punctures, or $S'$ has two connected components. After cutting, $\beta$ becomes a set $B$ of disjoint arcs $\left\{b_{1},...,b_{n}\right\}$ with endpoints at the two punctures corresponding to $\alpha$.  Consider a maximal subcollection 
\[B'= \left\{b_{i_{1}},...,b_{i_{q}}\right\} \subseteq B\]
 of pairwise non-isotopic arcs; note that $B'$ must fill $S'$ and therefore there is some linear function $g(\xi)$ which bounds $q=|B'|$ from below. Furthermore, $q=|B'|\leq 6g+3(p+2)-4$ (see Lemma $2.1$ of \cite{Kor-Pap}). Choose $h(\xi)=O(\xi)$, and $h(\xi) \gg 6g+3(p+2)-4$, ($h(\xi)=50\xi$, for instance). Note that the number of complementary regions of the arcs in $B'$ is also no larger than $h(\xi)$. The bottom line is that $q=|B'|$ is bounded above and below by linear functions of $\xi$:

\[  g(\xi)\leq |B'|\leq 50\xi \] \vspace{3 mm}
\textbf{Case 1: The original surface $S$ is closed, so that $p=0$}:

 In this case, since $B'$ fills $S'$, the collection of complementary components of $B'$ in $S'$ consists of a disjoint union of polygons, each having at least $3$ sides. The collection of complementary regions defines a dual graph $G_{S'}=(V(G_{S'}),E(G_{S'}))$, whose vertices $V(G_{S'})$ correspond to complementary regions, and such that two vertices are connected by an edge if and ony if they represent adjacent (across an arc in $B'$) complementary regions in $S'$. For $v\in V(G_{S'})$, let $d(v)$ denote the degree of $v$, and let $\bar{d}(G)$ denote the average degree of $G$. Note that 
\[ \bar{d}(G_{S'})\geq 3. \]

 For each $j, 1\leq j\leq k$, define the mass of $b_{i_{j}}$, denoted $m(j)$, to be the number of arcs in the original collection $B$ that were isotopic to $b_{i_{j}}$. If $j$ is such that 
\[m(j) > \frac{i(\alpha,\beta)}{\xi^{\lambda}},\]
$b_{i_{j}}$ is said to have \textit{large mass}; otherwise $b_{i_{j}}$ has \textit{small mass}. Note that 

\[ \sum_{1\leq j\leq q}m(j)= i(\alpha,\beta), \]
and therefore there can be no more than $\xi^{\lambda}$ arcs of large mass. Assume $\xi$ is sufficiently large so that
\[ \xi \gg \xi^{\lambda}. \]
(This will be made more precise below.) Then for all such surfaces, cutting along all large mass arcs yields a possibly disconnected, non-simply connected (indeed, non-sporadic) surface $S''$. This is because cutting along any arc $b_{i_{j}}$ in $B'$ decreases the complexity by at most $3$; to see this, it suffices to analyze the three possible cases:
\begin{enumerate}
\item $b_{i_{j}}$ starts and ends at the same puncture and is non-separating;
\item $b_{i_{j}}$ starts and ends at the same puncture and is separating;
\item The terminal punctures of $b_{i_{j}}$ are distinct.
\end{enumerate}
The complexity of the surface obtained by cutting along $b_{i_{j}}$ depends only on which of the three cases we are in; the details of this are left to the reader, because the non-simply connectedness of $S''$ is implied by the conclusion of the following lemma:

\begin{proposition} There exists $N_{1}\in \mathbb{N}$ such that if $\xi>N_{1}$, there exists a homotopically non-trivial simple closed curve $\gamma \subset S''$ intersecting no more than $f(\xi)$ arcs of $B'$. 
\end{proposition}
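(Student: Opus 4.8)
The plan is to pass to the dual graph of the cell decomposition of $S'$ cut out by $B'$, delete the comparatively few edges coming from large mass arcs, extract a short cycle from what is left by a Moore‑type girth estimate, and observe that this cycle is already an embedded homotopically non‑trivial simple closed curve which --- being disjoint from every large mass arc --- lies inside $S''$.

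Concretely, let $\Gamma$ be the graph with one vertex for each complementary region of $B'$ in $S'$ and one edge for each arc of $B'$, the edge joining the vertices of the two regions meeting along that arc (a loop when the same region lies on both sides), embedded in $S'$ in the usual dual way. Since $\alpha$ and $\beta$ are in minimal position and the arcs of $B'$ are pairwise non‑isotopic, no complementary region is a monogon or a bigon, so each is a polygon with at least three sides and every vertex of $\Gamma$ has degree at least $3$. Because $B'$ fills $S'$, the surface $S'$ deformation retracts onto $\Gamma$, so inclusion gives an isomorphism $H_1(\Gamma)\cong H_1(S')$. Let $\Gamma^-\subseteq\Gamma$ be obtained by deleting the at most $\xi^\lambda$ edges dual to the large mass arcs, keeping every vertex. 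A shortest cycle $C$ in $\Gamma^-$ is simple, so as a subspace of $S'$ it is an embedded simple closed curve $\gamma$; it crosses exactly the $|C|$ arcs of $B'$ dual to the edges of $C$, each once, and all of these are small mass arcs. In particular $\gamma$ is disjoint from every large mass arc and so lies in $S'$ minus those arcs, i.e. in the interior of $S''$, and it meets at most $|C|$ arcs of $B'$.

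The heart of the matter is that $\Gamma^-$ is dense enough to carry a short cycle. Let $F$ be the number of complementary regions of $B'$ and $q=|B'|$. We have $F\le 50\xi$, and an Euler characteristic count gives $q = F + 2g - 2$, so $q\ge 2g-1$, which dwarfs $\xi^\lambda$ once $\xi$ is large (depending on $\lambda$). Hence $\Gamma^-$ has at most $50\xi$ vertices and at least $q-\xi^\lambda$ edges, so its average degree is at least $2(q-\xi^\lambda)/F$; substituting $F = q - 2g + 2$ together with $2g - 2 = \tfrac{2\xi+2}{3}$ and $q\le 50\xi$ shows this exceeds $2 + \tfrac{1}{50}$ for $\xi$ large. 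In particular $\Gamma^-$ is not a forest and so contains a cycle. If it has a loop or a multiple edge its girth is at most $2$; otherwise the Moore bound for graphs whose average degree exceeds $2$ by a fixed amount (Alon--Hoory--Linial) bounds the girth by $2\log_{51/50}(50\xi) + 2$. Taking $f(\xi)$ to be this bound we obtain $f(\xi) = O(\log_2\xi)$, and $\gamma$ meets at most $f(\xi)$ arcs of $B'$, as required.

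Finally, $\gamma$ is homotopically non‑trivial in $S''$: as a simple cycle $C$ has non‑zero class in $H_1(\Gamma^-)$, the inclusion $\Gamma^-\hookrightarrow\Gamma$ is injective on $H_1$, and $H_1(\Gamma)\cong H_1(S')$, so $[\gamma]\ne 0$ in $H_1(S')$ and $\gamma$ is not null‑homotopic in $S'$; composing a hypothetical null‑homotopy of $\gamma$ in $S''$ with the gluing map $S''\to S'$ would then contradict this. Choosing $N_1 = N_1(\lambda)$ large enough that the two ``$\xi$ large'' estimates hold completes the proof. I expect the density estimate to be the only genuine difficulty: everything rests on $q = |B'|$ growing linearly in $\xi$ while only $O(\xi^\lambda) = o(\xi)$ arcs are of large mass, so that deleting the large mass edges leaves the average degree of the dual graph bounded away from $2$ by a fixed amount and the irregular Moore bound still yields a logarithmic girth; the surrounding topology --- embeddedness of $\gamma$, its disjointness from the large mass arcs, and its non‑triviality via the spine --- is routine.
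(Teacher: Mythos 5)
Your construction of the short cycle is the same as the paper's: form the dual graph of the $B'$-decomposition of $S'$, delete the at most $\xi^{\lambda}$ edges dual to large-mass arcs, check that the average degree stays above $2+\tfrac{1}{50}$, and invoke the irregular Moore bound (the paper cites this via Fiorini--Joret--Theis--Wood, which rests on Alon--Hoory--Linial) to get a simple cycle of length $O(\log_{2}\xi)$; your density estimate, run through the Euler characteristic identity $q-F=2g-2$, is in fact a more explicit version of the paper's. Where you genuinely diverge is the non-triviality of $\gamma$. The paper argues geometrically with $\beta$: a bigon between $\gamma$ and $\beta$ would have to contain a point of $\alpha\cap\beta$ (since $\tilde\gamma$ is a simple cycle and meets each arc of $B''$ once), and surgering $\beta$ across it would contradict minimal position of $\alpha$ and $\beta$; this simultaneously shows $\gamma$ and $\beta$ are in minimal position. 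You instead observe that $\Gamma$ is the spine dual to the ideal decomposition of $S'$ by $B'$, hence a deformation retract, so a simple cycle in $\Gamma^{-}$ is non-zero in $H_{1}(S')$. Your argument is cleaner and avoids the bigon analysis; the paper's buys minimal position with $\beta$ (not actually needed downstream, since the intersection bound is an upper bound) and, more importantly, delivers essentialness of $\gamma$ \emph{in the original surface $S$}, which is what the induction in Theorem 1.2 uses (the paper says so explicitly right after the proposition). Your homological conclusion lives in $H_{1}(S')$, and the map $H_{1}(S')\to H_{1}(S)$ is not injective, so to close this (small) gap you should add that $S'=S\setminus\alpha$ is $\pi_{1}$-injective in $S$ (the complement of an essential simple closed curve is incompressible), whence $\gamma$ non-trivial in $\pi_{1}(S')$ is non-trivial in $\pi_{1}(S)$; since $S$ is closed in this case, non-trivial means essential. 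With that one sentence added, your proof is complete and correct.
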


In the statement of Proposition $3.1$, $f(\xi)$ is the same function $f$ from the statement of Theorem $1.2$, and it will be determined in the course of the proof. 
As will be proven below, proposition $3.1$ asserts that $\gamma$ is essential when viewed as a simple closed curve on the original surface $S$, not just as a curve on $S''$.

\textit{proof}: The arcs in $B''$ must fill $S''$ and therefore, as was the case with $B'$, these arcs cut the surface into finitely many simply connected regions. Denote by $G_{S''}$ the corresponding dual graph. Then 
\[ |V(G_{S''})|=|V(G_{S'})|,\]
and
\[|E(G_{S''})|\geq |E(G_{S'})|-\xi^{\lambda}\geq g(\xi)- \xi^{\lambda} \]
Note that some of the arcs in $B''$ may be isotopic on $S''$; indeed, if $T$ is a triangular complementary region in $S'$, and exactly one of the arcs of $\partial T$ has large mass, then the remaining two arcs will form a bigon in $S''$. It is also possible for all but one arc on the boundary of a complementary region in $S'$ to have large mass; if this happens, the remaining arc will bound a disk in $S''$. 

Choose $N_{1}$ large enough so that for all $\xi>N_{1}$,
\[ \bar{d}(G_{S''})\geq \frac{2(g(\xi)-\xi^{\lambda})}{|V(G_{S'})|}> 2.02 \]

We will need the following fact (see Lemma $3.2$ in \cite{Fio-Jor-The-Woo}; see also \cite{Die} for more details):

\begin{lemma} Let $\epsilon>0$. There exists a decreasing function $g:(0,1)\rightarrow \mathbb{R}_{+}$ so that if $G=(V,E)$ is any graph with $\bar{d}(G)>2+\epsilon$, then $G$ has girth no larger than $g(\epsilon)\cdot\log_{2}(|V|)$. 
\end{lemma}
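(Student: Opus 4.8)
The statement is a standard fact from structural graph theory, and the plan is to prove it by reducing to a subgraph of minimum degree $3$ and then running a Moore-type ball-counting bound. First I would observe that average degree only increases on passing to the densest connected component (the weighted average of the components' average degrees equals $\bar{d}(G)$), so we may assume $G$ is connected; and if $G$ is a forest there is nothing to prove, so assume it is not. The hypothesis $\bar{d}(G)>2+\epsilon$ is exactly the statement that the \emph{excess} $|E(G)|-|V(G)|$ exceeds $\tfrac{\epsilon}{2}|V(G)|$. Now iteratively suppress vertices of degree $2$ (replacing the two incident edges by a single edge that records the concatenated ``branch''), and delete vertices of degree $\le 1$, until we reach a graph $G'$ with $\delta(G')\ge 3$; throughout, the graph stays connected and nonempty, so the only degeneracy is that $G'$ may be a single vertex carrying loops. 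Cycles of $G$ correspond to cycles of $G'$, the length of a cycle of $G$ being the sum of the branch-lengths of the corresponding cycle of $G'$. Crucially, suppression and leaf-deletion preserve the excess, so $G'$ has excess greater than $\tfrac{\epsilon}{2}|V(G)|$; since $\delta(G')\ge 3$ forces $|E(G')|\le 3\bigl(|E(G')|-|V(G')|\bigr)$, the branch structure is ``fat'': the total branch-length $|E(G)|$ is carried by $\Omega(\epsilon\,|V(G)|)$ branches, so the branches have average length $O(1/\epsilon)$.

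The second step is a weighted Moore bound on $G'$. Weight each edge of $G'$ by its branch-length and run a weighted breadth-first search (Dijkstra) from an appropriately chosen vertex, watching for the first moment two distinct search-tree branches meet. Because $\delta(G')\ge 3$, the search tree branches with factor at least $2$ at every internal vertex, so absent such a meeting the number of vertices of $G'$ within weighted radius $\rho$ grows like $2^{\Omega(\epsilon\rho)}$; once this would exceed $|V(G')|\le|V(G)|$ we are forced to produce a cycle of $G'$ of total weight $O\bigl(\tfrac1\epsilon\log_2|V(G)|\bigr)$, i.e.\ a cycle of $G$ of that length. (The unweighted case $\delta(G')\ge 3$, $G'=G$ already gives the classical $2\log_2|V|$ Moore bound; the weights cost only the factor $O(1/\epsilon)$.) Setting $g(\epsilon):=C/\epsilon$ for the resulting absolute constant $C$ yields a decreasing positive function on $(0,1)$ with the required property, the few degenerate cases (the loop bouquet; $|V(G)|$ bounded; reassembling $G$ from its components) being absorbed into $C$ since $\log_2|V(G)|\ge 1$ and $\epsilon<1$.

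I expect the crux to be the weighted Moore step. Branch-lengths need not be uniform — a handful of branches could be long while the rest are short — and a weighted search started next to a long branch discovers nothing, so one must either initiate it at a vertex far from the few ``heavy'' branches (there are few, because heavy branches have bounded total length) or run an averaged version over all start vertices. Pinning this down is precisely the content of the cited estimate of Fiorini--Joret--Theis--Wood, which rests on the bound that a connected graph with $|V|+k$ edges contains a cycle of length $O\bigl(\tfrac{|V|}{k}\log k\bigr)$; applying that with $k=\lceil\tfrac{\epsilon}{2}|V|\rceil$ gives girth $O\bigl(\tfrac1\epsilon\log|V|\bigr)$ directly, and one could alternatively simply quote it (see also \cite{Die}).
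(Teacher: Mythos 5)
The paper does not prove this lemma at all: it simply quotes it as Lemma~3.2 of Fiorini--Joret--Theis--Wood \cite{Fio-Jor-The-Woo} (with a pointer to \cite{Die}), so your closing remark that ``one could alternatively simply quote it'' is exactly what the paper does, and on that basis your proposal is fine. Your sketch of an actual proof follows the standard route (pass to the densest component, suppress degree-$2$ vertices to reach minimum degree $3$, then run a Moore-type ball-counting argument on the weighted multigraph of branches), and you are right that the entire difficulty sits in the weighted Moore step: a naive BFS can be stalled by a few long branches, and handling this is precisely the content of the cited estimate, which you correctly leave to the reference rather than claiming to have resolved. One small slip worth fixing if you write this out: the inference ``the total branch-length $|E(G)|$ is carried by $\Omega(\epsilon|V(G)|)$ branches, so the branches have average length $O(1/\epsilon)$'' is not valid as stated when $\bar{d}(G)$ is large, since $|E(G)|$ need not be $O(|V(G)|/\epsilon)$; the correct bookkeeping is that a branch of length $\ell$ contains $\ell-1$ suppressed degree-$2$ vertices, so the total branch length is at most $|E(G')|+|V(G)|$ and the average branch length is at most $1+|V(G)|/|E(G')|\leq 1+2/\epsilon$, which gives the same conclusion. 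With that repair, and with the final appeal to \cite{Fio-Jor-The-Woo} for the quantitative cycle bound, your argument is sound and is, if anything, more informative than the paper's bare citation.
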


We recall that the \textit{girth} of a graph $G$ is the length of the shortest cycle on $G$. 

 Thus assuming $\xi(S)>N_{1}$, by Lemma $3.2$, $G_{S''}$ has a simple cycle $\tilde{\gamma}$ no longer than 
\[ g\left(\frac{1}{50}\right)\cdot \log_{2}(2h(\xi)):= f(\xi) \]

(Recall that $h(\xi)=50\xi$, and was chosen to be linear in $\xi$, and to bound the number of non-isotopic arcs in the collection $B'$ from above.) 

By construction, $\tilde{\gamma}$ corresponds to a simple closed loop $\gamma$ intersecting no more than $f(\xi)$ small mass arcs; it remains to show that $\gamma$ is homotopically non-trivial in $S$. 

 By construction, $\gamma$ intersects at least one arc of $B''$, and if $\gamma$ intersects $b\in B''$, it only does so once since $\tilde{\gamma}$ is a simple cycle. We will show that all such intersections between $\gamma$ and $\beta$ are essential, which immediately implies that $\gamma$ is essential. 

Arguing by contradiction, assume that $\gamma$ and $\beta$ are not in minimal position. Then a closed arc $j_{\gamma}$ of $\gamma$ forms a bigon with a closed arc $j_{\beta}$ of $\beta$. Since $\gamma$ only intersects each arc in $B''$ once, it follows that $j_{\beta}$ must contain pieces of two distinct arcs $e_{x},e_{y}$ of $B''$ as sub-arcs, each of which contains one of the two points of the set $\left\{x,y\right\}= j_{\gamma}\cap j_{\beta} $.  Therefore, $j_{\beta}$ must contain an element of $\beta\cap\alpha$.

 Let $\tilde{\beta}$ represent the curve obtained from $\beta$ by replacing $j_{\beta}$ with $j_{\gamma}$. Then $\tilde{\beta}$ is isotopic to $\beta$ but intersects $\alpha$ fewer times than $\beta$ does, which contradicts the initial assumption that $\alpha$ and $\beta$ began in minimal position.

This concludes the proof of Proposition $3.1$. $\Box$ \vspace{2 mm}

Hence $\gamma$ is an essential, simple closed curve on $S$, which only intersects small mass arcs of $\beta$, and only at most $f(\xi)$ of them. Therefore,
\[ i(\gamma,\beta)\leq f(\xi)\cdot \frac{i(\alpha,\beta)}{\xi^{\lambda}}< \frac{f(\xi)}{\xi^{\lambda}}\cdot \left(\frac{\xi^{\lambda}}{f(\xi)}\right)^{k-2}= \left(\frac{\xi^{\lambda}}{f(\xi)}\right)^{k-3},  \]
where the strict inequality is due to our initial assumption. Then by the induction hypothesis, 
\[ d_{\mathcal{C}}(\gamma,\beta)<k-1,  \]
and thus by the triangle inequality, 
\[ d_{\mathcal{C}}(\alpha,\beta)\leq d_{\mathcal{C}}(\alpha,\gamma)+d_{\mathcal{C}}(\beta,\gamma)<1+(k-1)= k. \]

This concludes the proof of Theorem $1.2$ in the case that $S$ is closed.  \vspace{2 mm}

\textbf{Case 2: p>0}:

When $p>0$, the complementary regions of $B'$ in $S'$ may now be once-punctured, and could have a single arc of $B'$ in its boundary. In this case, we modify the definition of $G_{S'}=(V(G_{S'}),E(G_{S'}))$ as follows. There are two edges in the edge set $E(G_{S'})$ for every arc in $B'$. The vertex set $V(G_{S'})$ of $G_{S'}$ consists of two vertices for every non-punctured complementary region, and one vertex for each punctured complementary region. 

Note that this does not completely determine the graph $G_{S'}$. Given a non-punctured complementary region, there are choices to be made as to which edges of $G_{S'}$ connect to which of the two vertices corresponding to that region. However, the conclusion we are seeking will not depend on any of these choices, and therefore we assume they have been made arbitrarily. 

The abstract graph $G_{S'}$ comes equipped with a preferred immersion into $S'$, (whose image we also refer to as $G_{S'}$),  satisfying the following property: 

Let $e_{1},e_{2}$ be two edges corresponding to the same arc $e\in B'$ on the boundary of a punctured region $R$ in $S'$. Then the region bounded $e_{1}\cup e_{2}\cup e$ contains the puncture of $R$ (see Figure $1$).

 \begin{figure}
\centering
	\includegraphics[width=3.5in]{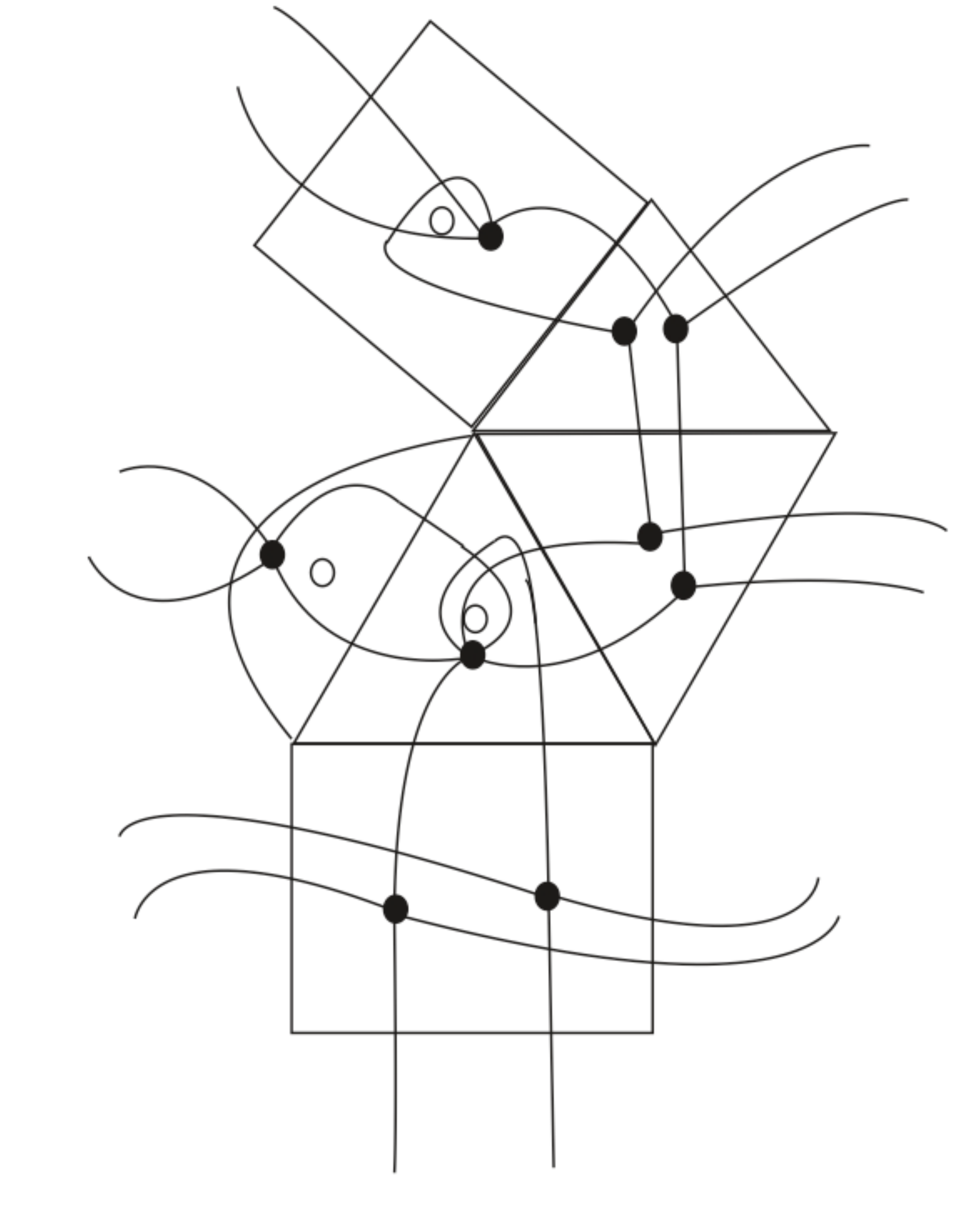}
\caption{ A possible portion of the graph $G_{S'}$ in the punctured case, immersed in $S'$. Punctures are represented by white circles, vertices of $G_{S'}$ by black circles. }
\end{figure}

The goal, as in the case that $p=0$, will be to show the existence of a simple closed curve $\gamma$, intersecting no more than $f(\xi)$ small mass arcs of $\beta$, counting multiplicity.

Denote by $P=\left\{R_{1},...,R_{s}\right\}$ the components of $S'\setminus B'$. As above, let $|R_{i}|$ denote the number of sides in the boundary of $R_{i}$. Let $U_{1}\subseteq P$ be the set of punctured regions, $U_{2}\subseteq P$ the set of non-punctured regions. Then by the Gauss-Bonnet theorem, 
\[ 2\pi|\chi(S')|= \pi \left(\sum_{R_{i}\in U_{1}}|R_{i}|+\sum_{R_{j}\in U_{2}}(|R_{j}|-2)\right) \]

\[\Rightarrow 2(2g+p-2)= \sum_{R_{i}\in U_{1}}|R_{i}|+\sum_{R_{j}\in U_{2}}(|R_{j}|-2) \]

\[\Rightarrow 2(2g+|U_{1}|-2)= \sum_{R_{i}\in U_{1}\cup U_{2}}|R_{i}|-2|U_{2}| \]

Note also that 

\[ |V(G_{S'})|=|U_{1}|+2|U_{2}| \hspace{1 mm} \mbox{and} \hspace{1 mm} |E(G_{S'})|= \sum_{R_{i}\in U_{1} \cup U_{2}}|R_{i}|.  \]

Since 
\[\bar{d}(G_{S'})= \frac{2|E(G_{S'})|}{|V(G_{S'})|}, \]

we have
\[ \bar{d}(G_{S'})= \frac{2\sum_{i}|R_{i}|}{|U_{1}|+2|U_{2}|}= \frac{2( 2(2g+|U_{1}|-2)+2|U_{2}|)}{|U_{1}|+2|U_{2}|} \]

\[= 2\cdot \frac{4g-4+2|U_{1}|+2|U_{2}|}{|U_{1}|+2|U_{2}|}\geq 2\cdot\frac{\xi(S)+p+2|U_{2}|}{p+2|U_{2}|}  \]

Note that the number $p+|U_{2}|$ of complementary regions $R_{i}$ is at most twice the number of the arcs in $B'$, since every such region is bounded by at least one arc, and each arc is adjacent to two regions. Therefore, 

\[ p+2|U_{2}|\leq 2(p+|U_{2}|)\leq 4(6g+3(p+2)-4)< h(\xi), \]

and hence 

\[ 2\cdot\frac{\xi(S)+p+2|U_{2}|}{p+2|U_{2}|} = 2 \left(1+ \frac{\xi}{p+2|U_{2}|} \right) \]
\[ > 2\left(1+\frac{\xi}{h(\xi)} \right) = 2.04 \]

 $G_{S''}$ is obtained from $G_{S'}$ as in the closed case; any edge in $E(G_{S'})$ corresponding to a large mass arc of $\beta$ is deleted, and $|V(G_{S''})|=|V(G_{S'})|$. 
Thus there exists some $N_{2}\in \mathbb{N}$ so that for $\xi(S)>N_{2}$, 

\[ \bar{d}(G_{S''})>2.02, \]
and therefore assuming $\xi>N_{2}$, $G_{S''}$ has a simple cycle $\tilde{\gamma}$ of length no more than 
\[ g\left(\frac{1}{50}\right)\log_{2}(|V(G_{S''})|) \leq f(\xi).  \]

Unlike in the closed case, $\tilde{\gamma}$ does not automatically correspond to a simple closed curve on $S''$, because $G_{S''}$ is only immersed and not embedded. Keeping this in mind, let $\gamma$ be a curve in the homotopy class of $\tilde{\gamma}$; we will first show that $\gamma$ is essential in $S$. We will again show this by proving that $\gamma$ is in minimal position with $\beta$; as above, assume that an arc $j_{\gamma}$ of $\gamma$ and an arc $j_{\beta}$ of $\beta$ bound a bigon in $S$. We claim that this bigon cannot be completely contained in $S''$, and therefore $j_{\beta}$ intersects $\alpha$. Then as in the closed case, homotoping $j_{\beta}$ across the bigon reduces the number of intersections between $\alpha$ and $\beta$, contradicting the assumption that $\alpha$ and $\beta$ were chosen to be in minimal position.  

Thus we must show that the bigon is not completely contained in $S''$. Note that, unlike in the closed case, it is now possible for $\gamma$ to cross a single arc of $\beta$ in $S''$ more than once. However, out of all of the arcs of $\beta$ entering the bigon, there must be some inner-most one, characterized by the property that together with a piece of $j_{\gamma}$, it bounds a bigon containing no arc of $\beta$ in its interior. This piece of $j_{\gamma}$ must then correspond to a segment of $\tilde{\gamma}$ contained in one complementary region of $S''$, and whose endpoints lie on the same arc on the boundary of this region. Thus this complementary region is punctured, and its puncture is contained in the interior of the bigon in question, a contradiction. Thus $\gamma$ is essential. 

Now, suppose $\gamma$ intersects itself once. Let $\gamma_{1}$ be one of the two simple closed curves obtained from $\gamma$ by starting and ending at the self-intersection point $x$.  If $\gamma_{1}$ is non-peripheral in $S$, replace $\gamma$ with $\gamma_{1}$.  Otherwise, let $\gamma_{2}$ be the other side of $\gamma$:
\[ \gamma_{2}=(\gamma\setminus \gamma_{1})\cup \left\{x\right\} \]

If $\gamma_{2}$ is non-peripheral in $S$, replace $\gamma$ with $\gamma_{2}$ and stop. We have reduced to the case where both $\gamma_{1}$ and $\gamma_{2}$ are peripheral, and therefore there exists punctures $y_{1}$, $y_{2}$ of $S$ so that $\gamma_{j}$ is homotopic into a neighborhood of $y_{j},j=1,2$. Note that since $\gamma$ is non-peripheral, $y_{1}\neq y_{2}$. Let $N(\gamma)$ be a small regular neighborhood of $\gamma$. Then the component $r$ of $\partial N(\gamma)$ encompassing both $y_{1}$ and $y_{2}$ is a simple curve, intersecting no more arcs of $\beta$ than does $\tilde{\gamma}$. It remains to show that $r$ is homotopically non-trivial and non-peripheral. 

 \begin{figure}
\centering
	\includegraphics[width=3.5in]{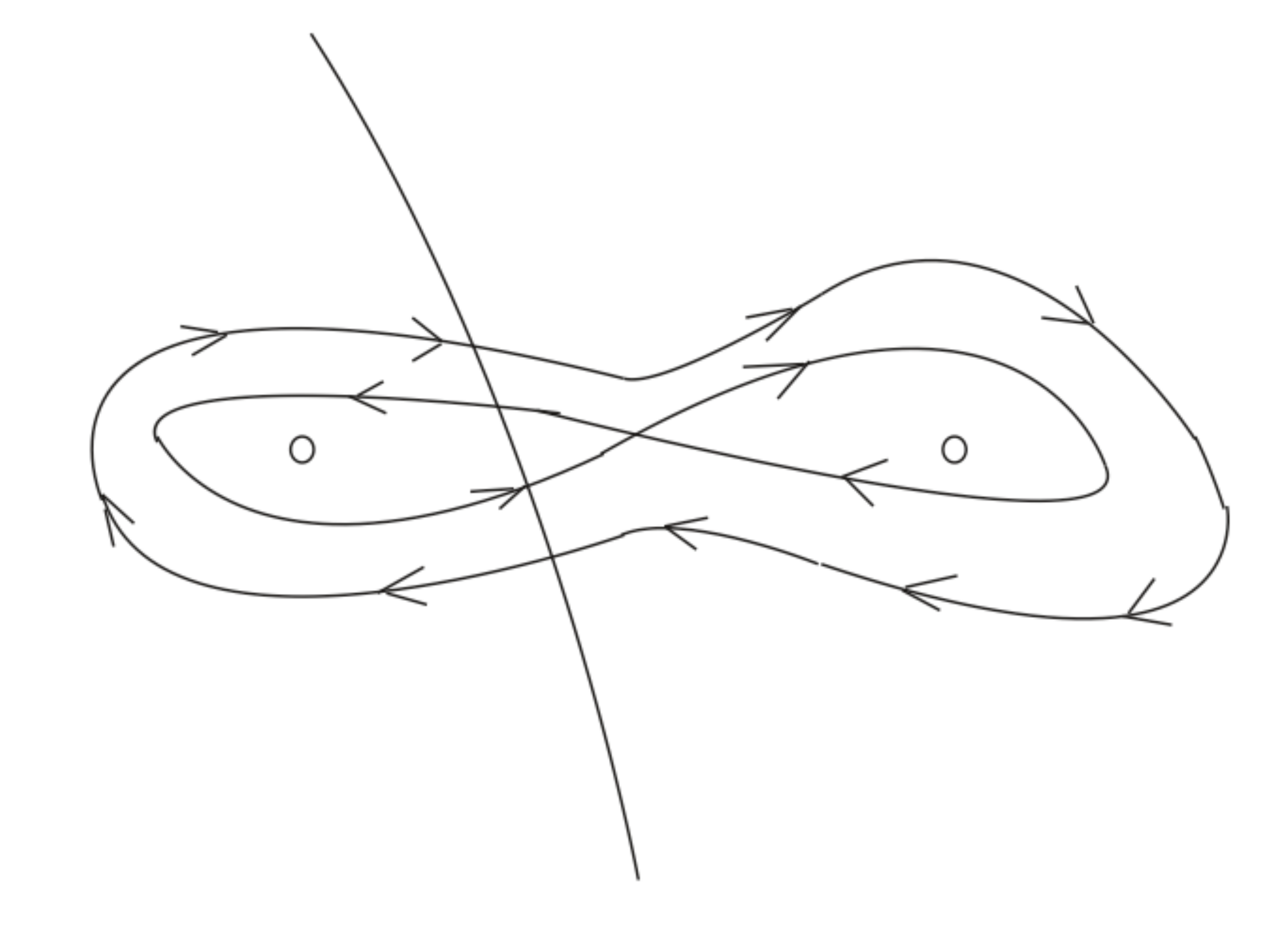}
\caption{ A schematic picture of $\gamma$ in the case of one self-intersection when $\gamma_{1},\gamma_{2}$ are both peripheral. White circles represent punctures, and $r$ is the simple curve encompassing $\gamma$. The transverse arc belongs to $\beta$, and it must intersect $r$ essentially. }
\end{figure}

To see that this is the case, note that there must be an arc of $\beta$ separating $y_{1}$ from $y_{2}$ that $\gamma$ crosses, and therefore $r$ crosses as well (see Figure $2$). By the same argument applied to $\gamma$ above, this intersection must be essential, and therefore $r$ is essential. Thus replace $\gamma$ with $r$.

Now suppose $\gamma$ has $k>1$ self-intersections, and let $\gamma_{1}$ be a simple closed curve obtained from $\gamma$ by starting and ending at some intersection point $x$ as above, and let $\gamma_{2}$ be the other side of $\gamma$. Then if $\gamma_{i}$ is non-peripheral for either $i=1,2$, replace $\gamma$ with $\gamma_{i}$. This reduces the number of self intersections that $\gamma$ possesses, so we are done by induction. If $\gamma_{1}$ and $\gamma_{2}$ are both peripheral, then we can homotope $\gamma$ into a neighborhood of the type pictured in Figure $2$, and as above, one of the boundary components of this neighborhood will be simple and essential, by the same argument used above.

Thus choose $N(\lambda)=\max\left\{N_{1},N_{2}\right\}$. This completes the proof of Theorem $1.2$. $\Box$

\section{Independence of the Hyperbolicity Constant on $\xi$}

In this section, we briefly summarize Bowditch's proof of hyperbolicity of $\mathcal{C}(S)$ as seen in \cite{Bow}, and then demonstrate how Theorem $1.1$ follows from his set-up, with the use of Theorem $1.2$. To avoid confusion, when possible we will use the same notation he introduces in his original article.

 Define $WX$ to be the set of \textit{weighted curves}; an element of $WX$ is a pair 

\[ (\lambda,\alpha); \alpha\in \mathcal{C}_{0}(S_{g,p}), \lambda\in \mathbb{R}_{+}.  \]

Then $WMX$, the set of $\textit{weighted multicurves}$ is the set of all finite formal sums of elements of $WX$ with pairwise disjoint summands. Note that any element of $\mathcal{C}_{0}(S_{g,p})$ is naturally considered as an element of either $WX$ or $WMX$ by assigning unit weight. Given $(p,\lambda_{1}),(q,\lambda_{2}) \in WX$, we can extend the notion of geometric intersection number for elements of $\mathcal{C}_{0}$ to $WX$ linearly
\[ i(\lambda_{1}p,\lambda_{2}q)=\lambda_{1}\lambda_{2}i(p,q), \]
and we can again extend linearly to $WMX$. Then for $\alpha,\beta,\delta \in WMX$ with $i(\alpha,\beta)=1$, define 

\[ l_{\alpha\beta}(\delta):= \max[i(\alpha,\delta),i(\beta,\delta)], \]
and
\[ m_{\alpha\beta}(\delta):= \sup\left[ \left\{ l_{\alpha\beta}(\delta) \right\} \cup \left\{\frac{i(\gamma,\delta)}{l(\gamma)} | \gamma \in \mathcal{C}_{0} \right\} \right]. \]
Then for any $r\geq 0$, we define 

\[ L(\alpha,\beta,r)=\left\{\delta \in \mathcal{C}_{0} | l_{\alpha\beta}(\delta)\leq r \right\}, \]
and 
\[ M(\alpha,\beta,r)=\left\{\delta\in \mathcal{C}_{0} | m_{\alpha\beta}(\delta)\leq r \right\}. \]

Note that $M(\alpha,\beta,r)\subseteq L(\alpha,\beta,r)$. In section $5$, Bowditch shows that there exists an essential annulus of width atleast $\eta=\eta(\xi)$ in any unit area singular Euclidean surface of complexity $\xi$, and uses this to show:

\begin{lemma} There exists $R=R(\xi)$ such that $M(\alpha,\beta,R)\neq \emptyset$. Furthermore, one can choose $R$ so that $R=O(\xi^{5/2})$. 
\end{lemma}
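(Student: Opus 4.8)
The plan is to follow Bowditch's argument from \cite{Bow}: its essential content is a translation of the flat-geometry ``definite width annulus'' statement into the combinatorial language of $m_{\alpha\beta}$. We may assume $\alpha\cup\beta$ fills $S$ (otherwise any curve disjoint from $\alpha\cup\beta$ already has $l_{\alpha\beta}=0$, and one passes to the subsurface filled by $\alpha\cup\beta$). First I would attach to $(\alpha,\beta)$ the singular Euclidean metric $\sigma=\sigma_{\alpha\beta}$ on $S$ in which $\alpha$ and $\beta$ are realised as the horizontal and vertical measured foliations, obtained by gluing a Euclidean rectangle of dimensions $a_i\times b_j$ at each point of $\alpha_i\cap\beta_j$ (here $\alpha=\sum a_i\alpha_i$, $\beta=\sum b_j\beta_j$). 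The cone angles of $\sigma$ are at least $\pi$, so $\sigma$ is nonpositively curved away from the punctures, and its total area is
\[
\sum_{i,j} a_i b_j\, i(\alpha_i,\beta_j) \;=\; i(\alpha,\beta) \;=\; 1 .
\]
(The only bookkeeping nuisance here is the case $p>0$, where punctures become cone points of angle $\pi$; this does not affect anything below.)

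Next I would record the standard comparison between $\sigma$-length and $l_{\alpha\beta}$: for every essential curve $\gamma$, taking the $\sigma$-geodesic representative and measuring its horizontal and vertical variation against the two foliations gives
\[
l_{\alpha\beta}(\gamma) \;\le\; \ell_\sigma(\gamma) \;\le\; i(\alpha,\gamma)+i(\beta,\gamma) \;\le\; 2\, l_{\alpha\beta}(\gamma),
\]
so $\ell_\sigma$ and $l_{\alpha\beta}$ agree up to a factor of $2$ on all of $\mathcal{C}_0$.

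The heart of the matter is the flat-geometry input recalled just before the statement: by Bowditch's effective argument, $(S,\sigma)$ contains an embedded essential flat cylinder $A$ with core geodesic $\delta\in\mathcal{C}_0$ and width $w\ge\eta(\xi)$, where $\eta(\xi)$ may be taken so that $1/\eta(\xi)=O(\xi^{5/2})$; this is where all the real work and all the quantitative control lies (one bounds the shortest essential $\sigma$-geodesic polynomially in $\xi$ via a saddle-connection triangulation and area count, then grows a maximal embedded flat collar about it, using the unit area bound to limit how thin it can be before it meets a singular point or itself). Granting this, I would finish as follows. The cylinder $A$ has circumference $c=\ell_\sigma(\delta)$ and width $w$, hence area $cw\le 1$, so $c\le 1/w\le 1/\eta(\xi)$ and therefore $l_{\alpha\beta}(\delta)\le\ell_\sigma(\delta)=c\le 1/\eta(\xi)$. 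For any essential $\gamma$ the $\sigma$-geodesic representatives of $\gamma$ and $\delta$ are in minimal position, so $\gamma$ meets $\delta$ in exactly $i(\gamma,\delta)$ points; each such crossing occurs along an arc of $\gamma$ that traverses $A$ from one boundary circle to the other and hence contributes at least $w$ to $\ell_\sigma(\gamma)$. Thus $w\cdot i(\gamma,\delta)\le\ell_\sigma(\gamma)\le 2\,l_{\alpha\beta}(\gamma)$, i.e. $i(\gamma,\delta)/l_{\alpha\beta}(\gamma)\le 2/w\le 2/\eta(\xi)$. Combining,
\[
m_{\alpha\beta}(\delta) \;=\; \sup\Big[\{l_{\alpha\beta}(\delta)\}\cup\big\{\tfrac{i(\gamma,\delta)}{l_{\alpha\beta}(\gamma)}\big\}\Big] \;\le\; \frac{2}{\eta(\xi)},
\]
so $\delta\in M(\alpha,\beta,R)$ for $R=R(\xi):=2/\eta(\xi)=O(\xi^{5/2})$.

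The main obstacle is entirely the flat-surface step: producing the embedded annulus whose width is bounded below by an explicit polynomial in $\xi$, and doing so with enough care about the implied constants that the exponent $5/2$ — rather than merely ``some polynomial'' — comes out. By contrast, the construction of $\sigma$, the length comparison, the area bound, and the cylinder-crossing estimate are all routine once that annulus is in hand, and the non-filling/punctured configurations are only a minor secondary point.
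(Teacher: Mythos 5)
Your proposal is correct and follows essentially the same route as the paper, which simply quotes this lemma from Bowditch's paper \cite{Bow}: both reduce everything to Bowditch's wide-annulus lemma for unit-area singular Euclidean surfaces (stated in the paper immediately before the lemma), and your derivation of $m_{\alpha\beta}(\delta)\leq 2/\eta(\xi)$ from the core curve of that annulus via the length comparison $l_{\alpha\beta}\leq \ell_{\sigma}\leq 2l_{\alpha\beta}$ and the area bound is exactly Bowditch's. The one piece neither you nor the paper proves — the quantitative lower bound $\eta(\xi)$ with $1/\eta(\xi)=O(\xi^{5/2})$ — is correctly identified by you as the real content and is taken from \cite{Bow} in both cases.
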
 
 
As a consequence of this, we obtain 

\begin{lemma} There exists $D=D(\xi)$ so that for any $\alpha,\beta\in \mathcal{C}_{0}(S_{g,p})$ 
\[\mbox{diam}(L(\alpha,\beta,R)) <D \]
\end{lemma}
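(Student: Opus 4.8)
The plan is to reduce the diameter bound to a bound on $d_{\mathcal{C}}(\alpha,\delta)$ for a single vertex $\delta\in L(\alpha,\beta,R)$, and then to extract that bound by feeding the intersection estimate $i(\alpha,\delta)\le R$ into Theorem $1.2$. Concretely, if $\delta,\delta'\in L(\alpha,\beta,R)$ then, directly from the definitions, $i(\alpha,\delta)\le l_{\alpha\beta}(\delta)\le R$ and likewise $i(\alpha,\delta')\le R$, where $R=R(\xi)=O(\xi^{5/2})$ is the constant furnished by Lemma $4.1$. By the triangle inequality,
\[
d_{\mathcal{C}}(\delta,\delta')\ \le\ d_{\mathcal{C}}(\delta,\alpha)+d_{\mathcal{C}}(\alpha,\delta'),
\]
so it suffices to bound $d_{\mathcal{C}}(\alpha,\delta)$ for every essential simple closed curve $\delta$ with $i(\alpha,\delta)\le R$.

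For the main range of $\xi$, I would proceed as follows. Fix once and for all some $\lambda\in(0,1)$ and let $N(\lambda),f$ be as in Theorem $1.2$; since $f(\xi)=O(\log_2\xi)$, after enlarging $N(\lambda)$ we may assume $\xi^{\lambda}>f(\xi)$ for all $\xi>N(\lambda)$, and we set $\rho=\rho(\xi):=\xi^{\lambda}/f(\xi)>1$. The contrapositive of Theorem $1.2$ states that, for $\xi>N(\lambda)$, $i(\alpha,\delta)<\rho^{\,k-2}$ forces $d_{\mathcal{C}}(\alpha,\delta)<k$; applying this with the least integer $k$ satisfying $R<\rho^{\,k-2}$ yields
\[
d_{\mathcal{C}}(\alpha,\delta)\ \le\ 2+\log_{\rho}R\ =\ 2+\frac{\ln R(\xi)}{\lambda\ln\xi-\ln f(\xi)}\ =:\ D_{1}(\xi).
\]
Because $\ln R(\xi)\le\tfrac{5}{2}\ln\xi+O(1)$ while $\ln f(\xi)=O(\ln\ln\xi)=o(\ln\xi)$, one gets $\limsup_{\xi\to\infty}D_{1}(\xi)\le 2+\tfrac{5}{2\lambda}$. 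Hence there is $N'\ge N(\lambda)$ and a constant $D_{0}$ — any $D_{0}>2+\tfrac{5}{2\lambda}$ works — with $d_{\mathcal{C}}(\alpha,\delta)\le D_{0}$ whenever $\xi>N'$, so $\mathrm{diam}\bigl(L(\alpha,\beta,R)\bigr)\le 2D_{0}$ for all such $\xi$.

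This leaves only the finitely many surfaces $S_{g,p}$ with $\xi\le N'$. For each of these $R(\xi)$ is a finite number, and the elementary fact recalled in the Introduction — that curve graph distance is bounded above by a logarithmic function of intersection number — gives $d_{\mathcal{C}}(\alpha,\delta)\le 2\log_{2}R(\xi)+2$ when $i(\alpha,\delta)\ge1$, and $d_{\mathcal{C}}(\alpha,\delta)\le1$ otherwise, so $\mathrm{diam}\bigl(L(\alpha,\beta,R)\bigr)$ is bounded by a quantity depending only on $\xi$. Taking $D(\xi)$ to be the maximum of $2D_{0}$ and of these finitely many bounds then proves the lemma. I do not expect a genuine obstacle here: all of the geometric substance sits in Lemma $4.1$ and Theorem $1.2$, and the present statement follows by bookkeeping once those are combined with the triangle inequality. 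The two points that need a little care are checking that $\xi^{\lambda}$ eventually exceeds $f(\xi)$ (so that $\rho>1$ and Theorem $1.2$ applies in usable form), and — the feature that ultimately drives Theorem $1.1$ rather than the present lemma — observing that $D_{1}(\xi)$ is not merely finite but \emph{uniformly bounded once $\xi$ is large}, which is precisely what must be substituted into Bowditch's hyperbolicity estimate in the next step.
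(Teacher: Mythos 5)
Your proof takes a genuinely different route from the paper's, and the difference exposes a real gap. The paper derives the diameter bound from the \emph{preceding} lemma (the nonemptiness of $M(\alpha,\beta,R)$): it fixes some $\delta\in M(\alpha,\beta,R)$ and estimates, for every $\gamma\in L(\alpha,\beta,R)$,
\[ d(\gamma,\delta)\;\le\; i(\gamma,\delta)+1\;\le\; l_{\alpha\beta}(\gamma)\,m_{\alpha\beta}(\delta)+1\;\le\; R^{2}+1, \]
so every point of $L(\alpha,\beta,R)$ lies within $R^{2}+1$ of the single curve $\delta$. The reason for routing the estimate through $m_{\alpha\beta}$ rather than through $\alpha$ itself is that $l_{\alpha\beta}$ and $m_{\alpha\beta}$ are defined for \emph{weighted} multicurves with $i(\alpha,\beta)=1$, and the sets whose diameter actually needs to be controlled in Bowditch's construction are $L(\lambda\alpha,\mu\beta,R)$ as $\lambda,\mu$ range over all positive reals with $i(\lambda\alpha,\mu\beta)=1$. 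Your argument centers the set at $\alpha$ and uses $i(\alpha,\delta)\le l_{\alpha\beta}(\delta)\le R$; but when $\alpha$ carries a small weight $\lambda$, the hypothesis only gives $i(\alpha,\delta)\le R/\lambda$, which is unbounded, and $\alpha$ need not lie anywhere near $L(\lambda\alpha,\mu\beta,R)$. The inequality $i(\gamma,\delta)\le l_{\alpha\beta}(\gamma)\,m_{\alpha\beta}(\delta)\le R^{2}$, by contrast, bounds the intersection of two honest elements of $\mathcal{C}_{0}$ independently of the weights. A telltale symptom is that your proof never invokes the lemma asserting $M(\alpha,\beta,R)\neq\emptyset$, even though the paper introduces the present statement explicitly as a consequence of it; the geometric substance (the wide-annulus result behind that lemma) has been bypassed rather than absorbed.

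If one reads the statement completely literally, with $\alpha,\beta$ unit-weight vertices of $\mathcal{C}_{0}$, then your argument is essentially sound -- indeed the crude bound $d(\alpha,\delta)\le i(\alpha,\delta)+1\le R+1$ would already give a $D(\xi)$ without any appeal to Theorem 1.2, and your use of Theorem 1.2 is really performing the job of the paper's subsequent paragraph, which upgrades $D(\xi)$ to the uniform value $18$ once $\xi$ is large. But that literal reading does not support the application to Bowditch's $\Lambda_{\alpha\beta}$, so the argument should be re-centered on an element of $M(\alpha,\beta,R)$; once that is done, your asymptotic analysis via Theorem 1.2 (applied to $i(\gamma,\delta)\le R^{2}$ rather than to $i(\alpha,\delta)\le R$) recovers the uniform bound correctly.
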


Indeed, Lemma $4.3$ follows immediately from the definitions and Lemma $4.2$: \vspace{1 mm}

There is some $\delta \in M(\alpha,\beta,R)$ since it is non-empty. Then for any $\gamma\in L(\alpha,\beta,R)$, we have 
\[ d(\gamma,\delta)\leq i(\gamma,\delta)+1\leq l_{\alpha\beta}(\gamma)m_{\alpha\beta}(\delta)+1\leq R^{2}+1. \]

Here, the first inequality is always true, independent of $\xi$ or the choice of $\gamma$ and $\delta$. The important inequalities in the chain above are the second and third ones, which in particular imply the existence of some $\delta\in L(\alpha,\beta, R)$ so that for any other $\gamma\in L(\alpha,\beta, R)$, $i(\gamma,\delta)\leq R^{2}+1$, and $R^{2}+1=O(\xi^{5})$. 

Hence, letting $f(\xi)$ be as in the statement of Theorem $1.2$,  there is some $\eta\in \mathbb{N}$ such that for $\xi>\eta$, 

\[ R^{2}(\xi)+1< \frac{\xi^{6}}{(f(\xi))^{7}} =\left(\frac{\xi^{\frac{6}{7}}}{f(\xi)}\right)^{(9-2)}\]

Therefore, for $N(6/7)$ the natural number in the statement of Theorem $1.2$, for all $\xi>\max(\eta,N(6/7))$, one has 
\[ d_{\mathcal{C}(S)}(\gamma,\delta) <9. \]

Thus, for all such values of $\xi$ we can use $D=18$ as the diameter bound for $L(\alpha,\beta,R)$.

 Bowditch considers a metric space $X$ having the property that to each pair of points $\alpha,\beta\in X$, there is a subset $\Lambda_{\alpha \beta}$, together with a \textit{coarse ordering} $\leq_{\alpha \beta}$ on $\Lambda_{\alpha \beta}$. By a coarse ordering, we mean a transitive relation satisfying the property that for any $x,y\in \Lambda_{\alpha \beta}$, either $x\leq_{\alpha \beta} y$ or $y\leq_{\alpha \beta} x$. Essentially, $(\Lambda_{\alpha \beta}, \leq_{\alpha \beta})$ is thought of as a coarsely parameterized line segment from $\alpha$ to $\beta$, where the parameterization is determined by the coarse ordering. 

Moreover, associated to $X$ is a function $\phi:X\times X\times X \rightarrow X$; given three points $x,y,z \in X$, $\phi(x,y,z)$ plays the role of the center of a triangle with vertices $x,y,z$. $\phi$ is required to satisfy the relations 
\[\phi(x,y,z)=\phi(y,z,x)=\phi(y,x,z), \hspace{1 mm} \mbox{and} \hspace{1 mm} \phi(x,x,y)=x, \]
 and define $\Lambda_{\alpha \beta}[xy]\subseteq \lambda_{\alpha \beta}$ by 
\[ \Lambda_{\alpha \beta}[xy] =\left\{z\in \Lambda_{\alpha \beta}: x\leq_{\alpha \beta} z \leq_{\alpha \beta} y \right\}. \]

He then shows: 

\begin{theorem} (\cite{Bow}) Suppose $(X, \left\{(\Lambda_{\alpha \beta})\right\}_{(\alpha,\beta)\in X\times X}, \phi)$ satisfies $\phi(\alpha,\beta,\gamma)\in \Lambda_{\alpha \beta} \cap \Lambda_{\beta\gamma} \cap \Lambda_{\gamma \alpha}$, and suppose there exists $K\geq 0$ satisfying 
\begin{enumerate}
\item $d_{\mathcal{H}}(\Lambda_{\alpha \beta}[\alpha,\phi(\alpha,\beta, \gamma)],\Lambda_{\alpha \gamma}[\alpha,\phi(\alpha,\beta,\gamma)])\leq K$,
\item given $x,y\in X$ with $d(x,y)\leq 1$, $\mbox{diam} (\lambda_{\alpha \beta}[\phi(\alpha,\beta,x),\phi(\alpha,\beta,y)])\leq K$, 
\item For $c\in \Lambda_{\alpha \beta}$, $\mbox{diam} (\Lambda_{\alpha \beta}[\gamma,\phi(\alpha,\beta,\gamma)]) \leq K$

\end{enumerate}
Then $X$ is  $\delta$-hyperbolic, with hyperbolicity constant depending only on $K$.

\end{theorem}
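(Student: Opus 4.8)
The plan is to show that the family $\{\Lambda_{\alpha\beta}\}$, equipped with the centroid map $\phi$, is a family of uniform quasigeodesics — one for each pair of points and joining them — whose triangles are uniformly thin, and then to deduce $\delta$-hyperbolicity with $\delta=\delta(K)$ by the usual bisection argument. Throughout I would take $X$ to be a graph with the path metric (as in the application to $\mathcal{C}_1(S)$), the general geodesic case being handled by working with integer-spaced points along geodesics, and I will use "coarse" to mean "up to an additive error bounded in terms of $K$ alone." The three hypotheses play complementary roles: axiom (2) controls the variation of $\phi(\alpha,\beta,\cdot)$ in its last slot and will force $\Lambda_{\alpha\beta}$ to be coarsely connected; axiom (3) says that $w\mapsto\phi(\alpha,\beta,w)$ coarsely fixes $\Lambda_{\alpha\beta}$, i.e.\ is a coarse retraction of $X$ onto $\Lambda_{\alpha\beta}$; and axiom (1), together with the hypothesis that $\phi(\alpha,\beta,\gamma)$ lies on all three sides, is what delivers the thin triangles.

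First I would record the consequences of the algebraic relations on $\phi$: from $\phi(x,y,z)=\phi(y,x,z)=\phi(y,z,x)$ and $\phi(x,x,y)=x$ one gets $\phi(\alpha,\beta,\alpha)=\alpha$ and $\phi(\alpha,\beta,\beta)=\beta$, so $\alpha,\beta\in\Lambda_{\alpha\beta}$ by the standing hypothesis, and the entire structure is symmetric in $\alpha,\beta,\gamma$ up to the coarse constants (in particular $\Lambda_{\alpha\beta}$ and $\Lambda_{\beta\alpha}$ may be identified). Next, for any $x,y\in X$ and a geodesic $x=z_0,z_1,\dots,z_n=y$ with consecutive vertices at distance $\le 1$, axiom (2) bounds the order-diameter of $\Lambda_{\alpha\beta}[\phi(\alpha,\beta,z_{i-1}),\phi(\alpha,\beta,z_i)]$ by $K$; taking $x=\alpha$, $y=\beta$ shows that the "shadows" $\phi(\alpha,\beta,z_i)$ form a coarse path inside $\Lambda_{\alpha\beta}$ running from $\alpha$ to $\beta$, and applying the same interpolation along a geodesic from $\alpha$ to an arbitrary $p\in\Lambda_{\alpha\beta}$ together with axiom (3) (so that $\phi(\alpha,\beta,p)$ is order-close to $p$) shows $\Lambda_{\alpha\beta}$ is $O(K)$-quasiconnected and is, coarsely, a $K$-Lipschitz retract of $X$ under $\phi(\alpha,\beta,\cdot)$. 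A further short argument with this retraction — comparing a geodesic in $X$ between two points of $\Lambda_{\alpha\beta}$ with its image under the retraction — then upgrades this to the statement that $\Lambda_{\alpha\beta}$ is a uniform quasigeodesic.

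With these structural facts in hand, I would prove the thin-triangle inequality: set $c:=\phi(\alpha,\beta,\gamma)\in\Lambda_{\alpha\beta}\cap\Lambda_{\beta\gamma}\cap\Lambda_{\gamma\alpha}$. Axiom (1) gives $d_{\mathcal{H}}(\Lambda_{\alpha\beta}[\alpha,c],\Lambda_{\alpha\gamma}[\alpha,c])\le K$, and axiom (1) applied to the permuted triple $(\beta,\alpha,\gamma)$, using the symmetry of $\phi$, gives $d_{\mathcal{H}}(\Lambda_{\alpha\beta}[c,\beta],\Lambda_{\beta\gamma}[\beta,c])\le K$; since the shadow sweep together with axiom (3) shows $\Lambda_{\alpha\beta}$ is covered coarsely by $\Lambda_{\alpha\beta}[\alpha,c]\cup\Lambda_{\alpha\beta}[c,\beta]$, we obtain $\Lambda_{\alpha\beta}\subseteq N_{O(K)}(\Lambda_{\alpha\gamma}\cup\Lambda_{\beta\gamma})$. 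Having thus produced, for every pair of points, a uniform quasigeodesic joining them, with uniformly thin triangles, I would conclude by the standard induction on $d(\alpha,\beta)$: one shows directly that any geodesic triangle in $X$ lies within a bounded (function of $K$) neighborhood of the corresponding $\Lambda$-triangle, hence each geodesic side lies in a bounded neighborhood of the union of the other two, which is precisely $\delta$-hyperbolicity with $\delta=\delta(K)$.

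I expect the main obstacle to be twofold and interrelated. First, genuinely proving that the $\Lambda_{\alpha\beta}$ are quasigeodesics: this is exactly where axiom (3) is essential, and the delicate point is that the coarse order $\le_{\alpha\beta}$ is a priori a purely combinatorial parameter, so one must carefully transfer between order-distance along $\Lambda_{\alpha\beta}$ and distance in $X$ — using the coarse retraction — to rule out that $\Lambda_{\alpha\beta}$ takes a long detour and returns. Second, the bookkeeping needed to convert the center-centric axioms (1)--(3) into the fully symmetric "quasiconnected / thin-triangle" statements that feed the bisection argument; once that input is assembled, the bisection/induction itself, though the technical heart of the write-up, is standard.
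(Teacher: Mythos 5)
First, note that this paper does not prove Theorem 4.1 at all: it is imported verbatim from Bowditch \cite{Bow} and used as a black box, and the introduction records that Bowditch's own proof runs through a subquadratic isoperimetric inequality (one recursively subdivides a cycle using the centers $\phi$ to build a spanning disc of area $O(n\log n)$, and then invokes the standard fact that subquadratic isoperimetric functions force hyperbolicity). Your proposal takes a genuinely different route: you convert axioms (1)--(3) into the statement that $\{\Lambda_{\alpha\beta}\}$ is a coarsely connected, coarsely transitive family of paths with uniformly slim triangles, and then invoke the ``guessing geodesics'' criterion. That criterion is by now standard (it is essentially Proposition 3.1 of \cite{BowII}, and versions appear in work of Masur--Minsky and Hamenst\"adt), so the strategy is legitimate; what it buys is conceptual transparency (the role of each axiom is isolated exactly as you describe), while the isoperimetric route buys explicit, effective control of $\delta$ in terms of $K$, which is what the present paper actually needs for uniformity.

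Two caveats. The step you defer as ``the standard bisection/induction'' is not bookkeeping --- it is the entire analytic content of the theorem, of comparable depth to Bowditch's isoperimetric argument, so in a complete write-up you must either prove it or cite it precisely; note also that \cite{BowII} postdates \cite{Bow}, so this packaging was not available to Bowditch. Second, your intermediate claim that each $\Lambda_{\alpha\beta}$ is a uniform quasigeodesic is both unnecessary for the criterion (which only needs coarse connectivity, bounded diameter when $d(\alpha,\beta)\leq 1$, and the slim-triangle inclusion $\Lambda_{\alpha\beta}\subseteq N_{O(K)}(\Lambda_{\alpha\gamma}\cup\Lambda_{\gamma\beta})$) and not actually delivered by your sketch: the coarse Lipschitz retraction $\phi(\alpha,\beta,\cdot)$ bounds chain-distance inside $\Lambda_{\alpha\beta}$ by distance in $X$, but does not by itself rule out backtracking of the order parameterization. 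Relatedly, your covering $\Lambda_{\alpha\beta}=\Lambda_{\alpha\beta}[\alpha,c]\cup\Lambda_{\alpha\beta}[c,\beta]$ and the identification of $\Lambda_{\alpha\beta}$ with $\Lambda_{\beta\alpha}$ use that $\alpha,\beta$ are coarsely the extreme points of $\leq_{\alpha\beta}$ and that the system is symmetric; these are true in Bowditch's construction but are not literally among the axioms as quoted, so they must be added as hypotheses or verified in the application.
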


In Theorem $4.1$, $d_{\mathcal{H}}(\cdot,\cdot)$ denotes the Hausdorff distance.

 Bowditch then goes on to define the sets $\Lambda_{\alpha\beta}$ to essentially be the curve obtained by choosing an element of $L(\lambda\cdot \alpha,\mu\cdot \beta, R)$ for each pair $\lambda,\mu \in \mathbb{R}_{+}$ satisfying $i(\lambda\cdot \alpha, \mu\cdot \beta)=1$. Hyperbolicity is proved by showing that for this choice of $\Lambda_{\alpha\beta}$ (together with the choice of centers $\phi(\alpha,\beta,\gamma)$ whose definition is not summarized here- see \cite{Bow} ), the conditions of Theorem $4.1$ are satisfied, with $K$ depending only on $D$. This proves Theorem $1.1$. $\Box$

\section{Bounded Diameter of Vertex Cycle Sets of Birecurrent Train Tracks}
In this section, we prove Theorem $1.3$, but before doing so we recall some of the basic terminology of train tracks on surfaces (refer to \cite{Mas-Mur}, \cite{Pen-Har},  \cite{Pap} for more details). Recall that a \textit{train track} $\tau\subset S$ is an embedded $1$-complex; edges are called \textit{branches} and vertices \textit{switches}.  Each edge is a smooth parameterized path with well-defined tangent vectors at the endpoints. Furthermore,  at each switch $v$, there is a single line $L\subset T_{v}S$ such that the tangent vector of any branch incident at $v$ lies on $L$. For each switch $v$, we choose a preferred direction of $L$; a branch $b$ incident at $v$ is called \textit{incoming} if its tangent vector at $v$ is aligned with the chosen direction, and \textit{outgoing} otherwise. 
We require that the valence of each switch be at least $3$, unless $\tau$ has a simple closed curve component $c$; in this case we have a single bivalent switch on $c$. 

Any component $Q$ of $S\setminus \tau$  is a surface with boundary consisting of smooth arcs running between cusps. We define the $\textit{generalized Euler characteristic}$ of $Q$ to be 
\[ \chi(Q)-\frac{1}{2}V(Q)\]
where $V(Q)$ is the number of cusps on $\partial Q$. We require that the generalized Euler characteristic of each component of $S \setminus \tau$ be negative.

A \textit{train path} is a smooth sub-path of $\tau$ which traverses a switch only by entering via an incoming branch and exiting via an outgoing branch. Given a train track $\tau$, let $\mathcal{B}$ denote the set of branches. A non-negative, real-valued function $\mu: \mathcal{B}\rightarrow \mathbb{R}$ is called a $\textit{transverse measure}$ on $\tau$ if for each switch $v$, it satisfies
\[ \sum_{b\in i(v)}\mu(b)=\sum_{b'\in o(v)}\mu(b') \]
where $i(v)$ is the set of incoming branches at $v$, $o(v)$ the set of outgoing branches. These are called the \textit{switch conditions}. 

$\tau$ is called \textit{recurrent} if it admits a transverse measure with all positive weights, and \textit{transversely recurrent} if, for each branch $b$, there exists a simple closed curve $c=c(b)$ intersecting $b$, which intersects $\tau$ transversely and is such that $S\setminus(\tau \cup c)$ has no bigons. $\tau$ is called \textit{birecurrent} if it is both recurrent and transversely recurrent, and \textit{generic} if all switches are at most trivalent.

If $\mu$ is a transverse measure on $\tau$, then so is $\lambda\cdot \mu$ for any $\lambda \in \mathbb{R}_{+}$ because the switch conditions are linear. It follows that the set of all transverse measures, viewed as a subset of $\mathbb{R}^{|B|}$, is a cone over a compact polyhedron in projective space. Thus, it is often preferable to consider projective classes of transverse measures. Let $P(\tau)$ denote the projective polyhedron of transverse measures; the class $[\mu]\in P(\tau)$ is called a \textit{vertex cycle}  of $\tau$ if it is an extreme point of $P(\tau)$, that is to say that it cannot be written as a non-trivial convex combination of two other projective classes of measures in $P(\tau)$.

A geodesic lamination $\lambda$ is \textit{carried} by $\tau$ if there is a $C^{1}$ map $\phi:\lambda\rightarrow \tau$ which is isotopic to the identity, and such that the restriction of the differential $d\phi$ to any tangent line of $\lambda$ is non-singular; this amounts to saying that $F(\lambda)$ is a train path on $\tau$. Suppose $c$ is a simple closed curve carried by $\tau$. Then $c$ induces a transverse measure called the \textit{counting measure}: each branch $b$ of $\tau$ is assigned the integer corresponding to the number of times $c$ traverses $b$. 

In general, if $[v]$ is a vertex cycle of $\tau$, $[v]$ has a unique representative that is the counting measure on a simple closed curve $c$ carried by $\tau$ (see \cite{Mas-Mur}).  Thus if $[v_{1}],[v_{2}]$ are two vertex cycles of a train track $\tau$, we can define $d_{\mathcal{C}}([v_{1}],[v_{2}])$ to be the curve graph distance between their simple closed curve representatives.

In order to prove the results regarding nested train tracks needed to show hyperbolicity of $\mathcal{C}$ in \cite{Mas-Mur}, Masur and Minsky rely on the fact that there is a bound $B=B(S)$ such that any two vertex cycles of the same train track are a distance of at most $B$ from one another, which is an immediate consequence of the fact that there are only finitely many train tracks on $S$ up to homeomorphism. However they conjecture that $B$ can be made independent of $S$, and conjecture further that $B=3$ suffices.

Using Theorem $1.2$, we will show: \vspace{1 mm}

\textbf{Theorem 1.3}. \textit{Let $\tau\subset S_{g,p}$ be a train track, and let $\alpha,\beta$ be vertex cycles of $\tau$. Then if $C(S_{g,p})$ is sufficiently large, }
\[ d_{\mathcal{C}(S)}(\alpha,\beta)\leq 3. \] 
\vspace{1 mm}

We note that this constant $B$ also occurs in the proof of the quasiconvexity of the disk set by Masur and Minsky in \cite{Mas-MurIII}.

\textit{proof}: We can assume that $\tau$ is generic and birecurrent since for any train track $\tau$, there exists a generic birecurrent track $\tau'$ such that  $P(\tau)=P(\tau')$ (see \cite{Pen-Har}).  Assume $d_{\mathcal{C}}(\alpha,\beta)=4$, and assume further that $\xi(S_{g,p})>N(3/4)$. Then by Theorem $1.2$, 
\[ i(\alpha,\beta)\geq \left(\frac{\xi^{\frac{3}{4}}}{f(\xi)}\right)^{2}=\frac{\xi^{\frac{3}{2}}}{(f(\xi))^{2}} \]

We will need the following fact about vertex cycles (see \cite{Ham} for proof):

\begin{lemma} If $\alpha$ is a simple closed curve representative of a vertex cycle $[v]$ of $\tau$, then if $\phi:\alpha \rightarrow \tau$ is the associated carrying map, $\phi(\alpha)$ traverses each branch of $\tau$ at most twice, and never twice in the same direction. 
\end{lemma}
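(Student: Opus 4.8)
The plan is to argue by contradiction using a splitting (surgery) argument and the fact that a vertex cycle is, by definition, an extreme point of the projective polyhedron $P(\tau)$. Realize $\alpha$ as a closed train path on $\tau$ via the carrying map $\phi$, and record $\phi(\alpha)$ as a cyclic sequence $\vec e_1,\dots,\vec e_m$ of \emph{oriented} branches, consecutive terms being joined as in a train path (entering a switch along an incoming branch, leaving along an outgoing one). Then the counting measure $\mu=\mu_\alpha$ is the transverse measure assigning to a branch $b$ the number of indices $k$ for which $\vec e_k$ is a traversal of $b$, and by hypothesis $[\mu]=[v]$ is an extreme point of $P(\tau)$. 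The first observation to record is that $\mu$ is primitive: the class $[v]$ has a unique representative that is the counting measure of a simple closed curve carried by $\tau$, so $\alpha$ realizes this representative, and if $\mu$ were an integer multiple $\ell\mu'$ with $\ell\ge 2$ and $\mu'$ an integral transverse measure, then the curve carried by $\mu$ would be $\ell$ parallel copies of a multicurve, hence not a connected simple closed curve.

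Now suppose, for contradiction, that some branch $b$ is traversed at least three times, or exactly twice in the same direction. In either case the pigeonhole principle produces two indices $k_1<k_2$ with $\vec e_{k_1}=\vec e_{k_2}$, i.e.\ two traversals of $b$ in the same direction. Split the cyclic sequence at these positions into blocks $P_1=(\vec e_{k_1},\dots,\vec e_{k_2-1})$ and $P_2=(\vec e_{k_2},\dots,\vec e_{k_1-1})$, indices read cyclically. The one point requiring verification is that each $P_i$ closes up to a \emph{legal} closed train path: every internal transition of $P_i$ is inherited from $\phi(\alpha)$, each block starts and ends at the same switch (an endpoint of $b$), and the single new ``seam'' transition --- from $\vec e_{k_2-1}$ back to $\vec e_{k_1}$, resp.\ from $\vec e_{k_1-1}$ back to $\vec e_{k_2}$ --- is legal precisely because $\vec e_{k_1}=\vec e_{k_2}$, so it coincides with the transition $\vec e_{k_2-1}\to\vec e_{k_2}$ (resp.\ $\vec e_{k_1-1}\to\vec e_{k_1}$) already occurring in $\phi(\alpha)$.

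Let $\mu_1,\mu_2$ be the counting measures of the closed train paths $P_1,P_2$. Each satisfies the switch conditions --- a closed train path uses one incoming and one outgoing branch at every passage through a switch, so its counting measure is a transverse measure, just as for $\mu_\alpha$ --- each is nonzero since $\mu_i(b)\ge 1$, and $\mu_1+\mu_2=\mu$ because every $\vec e_k$ lies in exactly one block. After normalizing, $[\mu]$ is a convex combination, with both weights strictly between $0$ and $1$, of $[\mu_1]$ and $[\mu_2]$ in $P(\tau)$; it remains only to see that these two classes are distinct. If instead $\mu_1=\lambda\mu_2$, then $\mu=(1+\lambda)\mu_2$ would exhibit $\mu$ as an integer multiple $\ge 2$ of an integral transverse measure (clear the denominator of the rational number $1+\lambda>1$), contradicting primitivity of $\mu$. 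Hence $[\mu_1]\ne[\mu_2]$, so $[\mu]$ is a non-trivial convex combination of two distinct points of $P(\tau)$, contradicting that $[v]$ is a vertex cycle; this establishes the claim.

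The step I expect to be the genuine (if modest) obstacle is excluding the degenerate possibility $[\mu_1]=[\mu_2]$; this is precisely where primitivity of the counting measure of a \emph{connected} simple closed curve must be invoked. The remaining ingredients --- that each block is a legal closed train path and that its counting measure satisfies the switch conditions --- are routine verifications, and the splitting itself is the familiar cut-and-reglue at a single branch that underlies the standard estimates on vertex cycles.
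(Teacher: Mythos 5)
Your argument is correct: splitting the closed train path at two same-direction traversals of a branch, checking that each block closes up to a legal train path whose counting measure satisfies the switch conditions, and using primitivity of the counting measure of a connected simple closed curve (via the standard bijection between integral transverse measures and carried multicurves) to rule out $[\mu_1]=[\mu_2]$ gives a complete proof that $[\mu]$ would otherwise fail to be an extreme point of $P(\tau)$. Note that the paper does not prove this lemma itself but cites \cite{Ham}; your surgery argument is essentially the standard proof given in that reference.
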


Since $\tau$ is birecurrent, for any $\epsilon>0$, there exists a hyperbolic metric $\sigma$ on $S$ so that $\tau$ has geodesic curvature less than $\epsilon$ with respect to $\sigma$ \cite{Pen-Har}.  Let $\alpha_{\sigma}$ (resp. $\beta_{\sigma}$) denote the unique geodesic representative of $\alpha$ (resp. $\beta$) in the metric $\sigma$. By choosing $\epsilon$ sufficiently small, we can assume $\alpha_{\sigma}$ and $\beta_{\sigma}$ both lie within a small embedded tubular neighborhood $N_{\epsilon'}(\tau)$ foliated by transverse ties. Collapsing these ties to points yields a train track isotopic to $\tau$ within some small bounded distance of $\tau$.

Let $\mbox{Br}(\tau)$ denote the number of branches of $\tau$. We show the following:
\begin{lemma} If $\alpha$ and $\beta$ are two vertex cycles of $\tau$, then 
\[ i(\alpha,\beta)\leq 4\mbox{Br}(\tau) \]
\end{lemma}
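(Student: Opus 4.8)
The plan is to bound $i(\alpha,\beta)$ branch-by-branch. By Lemma~5.2, each of the carrying maps $\phi_\alpha\colon\alpha\to\tau$ and $\phi_\beta\colon\beta\to\tau$ traverses each branch $b$ of $\tau$ at most twice. After isotoping $\alpha$ and $\beta$ into a small tubular neighborhood $N_{\epsilon'}(\tau)$ foliated by transverse ties (as set up in the paragraph preceding the statement), every intersection point of $\alpha$ and $\beta$ lies inside the thickened image of some branch $b$. Fix a branch $b$ and let $\tilde b$ be the corresponding rectangle (a neighborhood of $b$ foliated by ties). Inside $\tilde b$, $\alpha$ appears as at most two arcs running from one end of the rectangle to the other (roughly parallel to $b$), and likewise $\beta$ appears as at most two such arcs. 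Two families of transverse, ``horizontal'' arcs in a rectangle, one family of size $\le 2$ and the other of size $\le 2$, can be made to intersect at most $2\cdot 2 = 4$ times; moreover since both $\alpha$ and $\beta$ are essentially ``parallel to $b$'' inside $\tilde b$ (their tangent lines agree with the tie directions' transverse line), after a small isotopy supported in $\tilde b$ one can in fact arrange that the arcs of $\alpha$ and of $\beta$ inside $\tilde b$ are each monotone across the rectangle, so the count $4\,\mathrm{Br}(\tau)$ is an honest upper bound for $i(\alpha,\beta)$ once we sum over all branches.

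First I would make precise the statement that, inside the rectangle neighborhood of a branch, $\alpha_\sigma$ (resp.\ $\beta_\sigma$) consists of at most two strands crossing monotonically; this follows from Lemma~5.2 (at most two traversals, and never twice in the same direction, which prevents the strands from being forced to nest or turn back) together with the fact that the geodesic representatives lie in $N_{\epsilon'}(\tau)$ with small geodesic curvature, so that they stay transverse to the ties throughout each branch. Next I would observe that all intersections of $\alpha$ with $\beta$ occur within these branch-rectangles: outside the branches we are near the switches, and there a more careful local picture is needed, but one can push intersections off the switch regions into the adjacent branches since near a switch the strands all emanate tangent to the same line $L$ and can be combed apart. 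Then, in each branch-rectangle, two strands of $\alpha$ and two strands of $\beta$, all monotone across the rectangle, meet in at most $4$ points. Summing over the $\mathrm{Br}(\tau)$ branches gives $|\alpha\cap\beta|\le 4\,\mathrm{Br}(\tau)$, and since $\alpha,\beta$ (being the geodesic representatives $\alpha_\sigma,\beta_\sigma$, or the isotoped curves in $N_{\epsilon'}(\tau)$) are in minimal position, this bounds $i(\alpha,\beta)$.

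I expect the main obstacle to be the bookkeeping near the switches: one must argue that no intersections are ``hidden'' in the switch regions and that the combing of strands toward and away from a switch does not create extra crossings beyond those already accounted for in the branch-rectangles. Because $\tau$ is generic (switches at most trivalent) this local analysis is a finite check — at a trivalent switch the incoming and outgoing strands can be ordered consistently along the line $L$, and both $\alpha$ and $\beta$ respect this ordering — but writing it cleanly is the delicate part. A secondary subtlety is the ``never twice in the same direction'' clause of Lemma~5.2: without it, two traversals of a branch by $\alpha$ in the same direction would be parallel copies and would not change the count, but with it we are guaranteed the two strands have opposite orientations, which is exactly what is needed to keep them (and similarly the two strands of $\beta$) disjoint and monotone inside the rectangle, so that the crude product bound $2\times 2 = 4$ applies.
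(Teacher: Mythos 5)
Your argument is essentially the paper's proof: count branch by branch, use the traversal bound (Lemma 5.1 in the paper, not 5.2) to get at most two strands of each curve in each branch rectangle, and conclude at most $2\times 2=4$ intersections per branch; the paper gets the "at most one crossing per pair of strands" step directly from the fact that $\alpha_\sigma,\beta_\sigma$ are geodesics (hence in minimal position), rather than from your monotonicity/combing discussion. The proposal is correct.
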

\textit{proof}: Let $b$ be any branch of $\tau$, and let $N_{\epsilon'}(b)$ denote the restriction of $\epsilon'$ tie-neighborhood of $\tau$ in the metric $\sigma$ to $b$. By Lemma $5.1$, $\alpha_{\sigma}$ and $\beta_{\sigma}$ can each enter $N_{\epsilon'}(b)$ at most twice. Since $\alpha_{\sigma}$ and $\beta_{\sigma}$ are both geodesics, they are in minimal position and therefore any arc of $\alpha_{\sigma}$ can intersect a given arc of $\beta_{\sigma}$ at most once within $N_{\epsilon'}(b)$. $\Box$

For any train track $\omega\subset S$, one has the bound (\cite{Pen-Har}) 

\[ \mbox{Br}(\omega)\leq -9\chi(S)-3p=18g+6p-18 \]

Thus there is some $k\in \mathbb{N}$ so that for all $\xi>k$, 
\[ \frac{\xi^{\frac{3}{2}}}{(f(\xi))^{2}} = i(\alpha,\beta) \gg 4\cdot \mbox{Br}(\tau),  \]
which contradicts Lemma $5.2$. Therefore for all surfaces $S$ satisfying $\xi(S)>\max(k,N(3/4))$, $\alpha$ and $\beta$ can not both be vertex cycles of the same train track on $S$, given that their curve graph distance is at least $4$.

 $\Box$

\section{Uniformity of the Lipschitz Constants for the Teichm\"{u}ller Projection Map}
In this final section, we prove Theorem $1.4$: \vspace{2 mm}

\textbf{Theorem 1.4}. \textit{There exists $c>0$ so that for any choice of $g,p$, 
\[ d_{\mathcal{C}(S_{g,p})}(F(x),F(y))\leq c\cdot d_{\mbox{Teich}(S_{g,p})}(x,y)+c \]
for any $x,y\in \mbox{Teich}(S_{g,p})$. In other words, the map sending a Riemann surface to any curve in $\mathcal{C}(S)$ with minimal extremal length is coarsely Lipschitz, with Lipschitz contants independent of the choice of $S$.  } \vspace{2 mm}

\textit{proof}:  We follow Masur-Minsky's proof of the complexity-dependent version of this statement, as seen in section $2$ of \cite{Mas-Mur}. Recall that 
\[\Phi:\mbox{Teich}(S)\rightarrow \mathcal{P}(\mathcal{C}_{0}(S))\]
 is the map that associates to each Riemann surface $x$ the set of isotopy classes of curves with smallest extremal length. By subdividing the Teichm\"{u}ller geodesic segment connecting $x$ to $y$ into unit length subsegments, It suffices to show that there exists some constant $c$ such that for any choice of $S$, given $x,y\in \mbox{Teich}(S)$ with $d_{\mbox{Teich}}(x,y)\leq 1$, 
\[ \mbox{diam}_{\mathcal{C}(S)}(\Phi(x)\cup\Phi(y))\leq c \]

For each complete, finite volume hyperbolic metric on $S$, there is an essential simple closed curve with hyperbolic length less than some constant $b=b(S)$, the \textit{Bers constant} ; it is known that $b(S)=O(\log(\xi(S))$ (see \cite{Bus}). 

What's more, the extremal length is bounded above by an exponential function of hyperbolic length (see \cite{Mas}): concretely, fixing a Riemann surface $x$ and an essential simple closed curve $\gamma$, let $\mbox{ext}_{x}(\gamma)$ denote extremal length, and $\mbox{hyp}_{x}(\gamma)$ the length of the geodesic representative of $\gamma$ in the unique complete finite volume hyperbolic metric in the conformal class of $x$. Then 

\[ \mbox{ext}_{x}(\gamma)\leq \frac{\mbox{hyp}_{x}(\gamma)}{2}e^{\mbox{hyp}_{x}(\gamma)/2} \]

Therefore, there is some constant $E=E(S)$ such that any Riemann surface in $\mbox{Teich}(S)$ has a curve with extremal length no more than $E(S)$, and $E(S)=O(\xi\log(\xi))$. 

Recall also Kerckhoff's characterization of $d_{\mbox{Teich}}$ in terms of extremal lengths (see \cite{Ker}):

\[  d_{\mbox{Teich}}(x,y)=\frac{1}{2}\sup_{\gamma\in \mathcal{C}_{0}(S)}\frac{\mbox{ext}_{y}(\gamma)}{\mbox{ext}_{x}(\gamma)} \]

Now, let $\alpha\in \Phi(x)$; since $\alpha$ minimizes extremal length in $x$, $\mbox{ext}_{x}(\alpha)\leq E(S)$. Since $d_{\mbox{Teich}}(x,y)\leq 1$, by Kerckhoff's formula for the Teichm\"{u}ller distance, given any $\beta\in \Phi(y)$, it follows that $\mbox{ext}_{x}(\beta)\leq e^{2}E(S)$. As seen in both \cite{Mas-Mur} and \cite{Min}, 
\[ \mbox{ext}_{x}(\alpha)\mbox{ext}_{x}(\beta)\geq i(\alpha,\beta)^{2}\]
and therefore 
\[ i(\alpha,\beta)\leq eE(S)=o(\xi(S)^{2}) \]
Thus Theorem $1.2$ implies that there is some $k\in \mathbb{N}$ so that for $\xi(S)>k$, 
\[ d_{\mathcal{C}(S)}(\alpha,\beta)\leq 4. \]

Independent of the choice of complexity, curve complex distance is bounded above by a logarithmic function of intersection number (see \cite{Bow}):
\[ d_{\mathcal{C}}(\alpha,\beta)\leq f(i(\alpha,\beta)), f:\mathbb{N}\rightarrow \mathbb{N} \hspace{1 mm} \mbox{and} \hspace{1 mm} f=O(\log(n)) \]
Therefore it suffices to choose
\[ c=\max(4, \max_{ \xi(S)<k}f(\ceil*{eE(S)}) )    \]

 Here, $\ceil*{x}$ denotes the smallest integer larger than $x$. This completes the proof of Theorem $1.5$.  $\Box$

\end{document}